\pgfplotsset{compat=1.16}
\theoremstyle{plain}
\newtheorem{theorem}{Theorem}
\newtheorem{lemma}[theorem]{Lemma}
\newtheorem{proposition}[theorem]{Proposition}
\newtheorem{fact}[theorem]{Fact}
\newtheorem*{fact*}{Fact}
\theoremstyle{remark}
\newtheorem{remark}{Remark}
\newcommand\ceff{\mathscr{C}}
\newcommand\ceffprt{\beta}
\renewcommand\root{\text{\o}}
\newcommand\cd[1][t]{\mathsf{c}^{#1}}
\newcommand\res[1][t]{\mathsf{r}^{#1}}
\newcommand\numch[2][t]{\nu_{#2}^{#1}}
\newcommand\wt[1][t]{\mathsf{A}^{#1}} 
\newcommand\prt[1]{{#1}_{*}}
\newcommand\N{\mathbb{N}}
\newcommand\R{\mathbb{R}}
\newcommand\words{\mathcal{U}}
\newcommand\dd{\mathop{}\!\mathrm{d}}
\DeclarePairedDelimiter{\renorm}{\langle}{\rangle}
\DeclarePairedDelimiter{\abs}{\lvert}{\rvert}
\DeclarePairedDelimiterX\intff[2]{[}{]}{#1,#2}
\DeclarePairedDelimiterX\intfo[2]{[}{)}{#1,#2}
\DeclarePairedDelimiterX\intof[2]{(}{]}{#1,#2}
\DeclarePairedDelimiterX\intoo[2]{(}{)}{#1,#2}
\DeclarePairedDelimiter{\pars}{(}{)}
\DeclarePairedDelimiter{\bracks}{[}{]}
\DeclarePairedDelimiter{\braces}{\lbrace}{\rbrace}
\DeclarePairedDelimiterX{\setof}[2]{\lbrace}{\rbrace}{#1\,{:}\,#2}
\DeclarePairedDelimiterX{\bracksof}[2]{[}{]}{#1\,\delimsize\vert\,#2}
\DeclarePairedDelimiterX{\parsof}[2]{(}{)}{#1\,\delimsize\vert\,#2}
\DeclarePairedDelimiterXPP\lnorm[2]{}\lVert\rVert{_{#1}}{#2}
\DeclarePairedDelimiter{\floor}{\lfloor}{\rfloor}
\DeclarePairedDelimiter{\ceil}{\lceil}{\rceil}
\newcommand\E{\mathbb{E}}
\newcommand{\Pq}{\mathrm{P}} 
\newcommand{\Eq}{\mathrm{E}} 
\newcommand{\Et}{\mathbf{E}} 
\newcommand{\Pt}{\mathbf{P}} 
\newcommand\GW{\mathbf{GW}}
\newcommand\mi{\mathop{\wedge}} 
\newcommand\oforder{\mathrel{\asymp}}
\newenvironment{acknowledgements}{\par\noindent\textbf{Acknowledgements: }}{}
\newenvironment{classification}{%
  \noindent{\slshape\bfseries 2010 Mathematics Subject Classification.}}{}
\newenvironment{keywords}{\noindent{\slshape\bfseries Keywords.}}{}
\renewenvironment{abstract}{%
  \small
  \begin{center}
    \textbf{Abstract}
  \end{center}
  \par
  }{}
  \date{April 26, 2023}
\title{Conductance of a subdiffusive random weighted tree}
\author{Pierre Rousselin\thanks{LAGA, université Paris 13}}
\begin{document}
\maketitle
\begin{abstract}
  We work on a Galton--Watson tree with random weights, in the so-called
  ``subdiffusive'' regime.
  We study the rate of decay of the conductance between the root and the $n$-th
  level of the tree, as $n$ goes to infinity, by a mostly analytic method.
  It turns out the order of magnitude of the expectation of this conductance can
  be less than $1/n$ (in contrast with the results of Addario-Berry--Broutin--Lugosi
  and Chen--Hu--Lin), depending on the value of the second zero
  of the characteristic function associated to the model.

  We also prove the almost sure (and in $L^p$ for some $p>1$)
  convergence of this conductance divided by its expectation towards the limit
  of the additive martingale.

  \begin{keywords}
    Random walks in random environments, Galton--Watson trees,
    conductance.
  \end{keywords}

  \begin{classification}
    60J80, 60G50, 60F25, 60F15.
  \end{classification}

\end{abstract}

\section{Introduction}
The strong links between electric networks and reversible random walks on graphs
have emerged during the second half of the last century and were popularized in the
seminal book \cite{doyle_snell}.
The special cases of random walks on
(random) trees have been thouroughly studied during the 90's by Lyons
(\cite{Lyons_rwperco,lyons_1992}) and Lyons, Pemantle and Peres
(\cite{LPP95,LPP_biased}), making good use of the electric networks theory.

More recent works on \emph{transient} $\lambda$-biased
random walks on (Galton--Watson) trees show that the effective conductance of the
tree is key to understanding the asymptotic behavior of the walk
(see \cite{Elie_speed} for the speed and \cite{shen_lin_harmonic_biased,Rou2018}
for the dimension of the harmonic measure).

In this paper, we deal with a \emph{null-recurrent}
model of random walk on a Galton-Watson
random weighted tree in a regime called ``subdiffusive'' (see below for
definitions).
The effective conductance of the whole tree is zero by recurrence of the walk
and we are interested in the rate of decay of the conductance between the root
of the tree and the vertices at height $n$, as $n$ goes to infinity.  This gives
the order of magnitude of the probability that the walk hits level $n$ before
returning to the root (see below for details).

\subsection{Conductance of a tree}
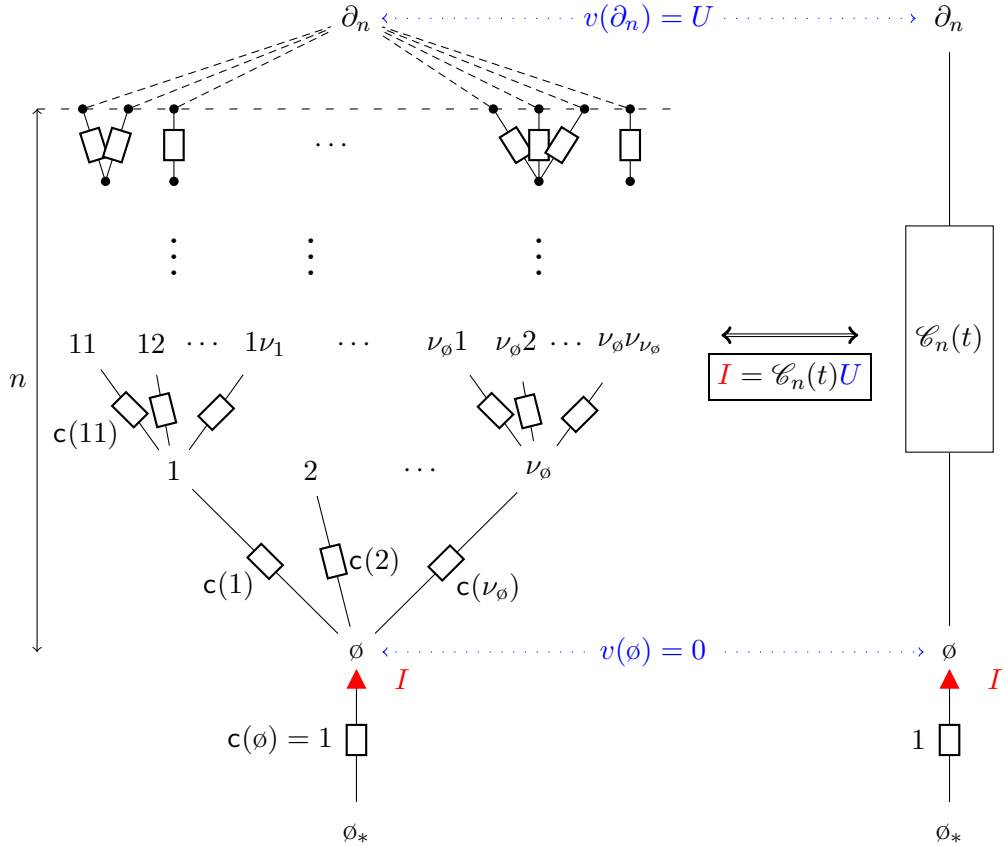
\begin{figure}
  \begin{tikzpicture}[
    scale=1.2,
    resi/.style={
      rectangle,
      minimum width=4mm,
      fill=white,
      draw,
      thick,
      midway,
      sloped
    },
    vert/.style={
      circle,
      minimum size=5mm,
      fill=white
    }, 
    verthaut/.style={
      circle,
      draw,
      fill=black,
      scale=.3
    }
    ]
    \node (root) at (0,0) [vert] {$\root$};
    \node (prtroot) at (0,-2) [vert] {$\prt\root$} ;
    \node (1) at (-2, 2) [vert] {$1$} ;
    \node (2) at (-.5, 2) [vert] {$2$} ;
    \node (der) at (2, 2) [vert] {$\nu_\root$} ;
    \path (2) -- node [midway] {$\cdots$} (der);
    \draw (root) -- node [resi] {} (prtroot);
    \draw (root) -- node [resi] {} (1);
    \draw (root) -- node [resi] {} (2);
    \draw (root) -- node [resi] {} (der);
    \filldraw [color=red] (0,-0.2) -- ++(-0.1,-0.2) -- ++(0.2,0) -- cycle;
    \node [color=red] at (.5, -.3) {$I$};

    \path (prtroot) -- node [midway, left=5] {$\cd[](\root) = 1$} (root);
    \path (root) -- node [midway, below left] {$\cd[](1)$} (1) ;
    \path (root) -- node [midway, below right] {$\cd[](\nu_\root)$} (der);
    \path (root) -- node [midway, right = 2] {$\cd[](2)$} (2);

    \begin{scope}[shift=(1), xscale=.5, yscale=.7]
      \node (11) at (-2, 2) [vert] {$11$} ;
      \node (12) at (-.5, 2) [vert] {$12$} ;
      \node (1der) at (2, 2) [vert] {$1\nu_1$} ;
      \path (12) -- node [midway] {$\cdots$} (1der);
      \draw (1) -- node [resi] {} (11);
      \draw (1) -- node [resi] {} (12);
      \draw (1) -- node [resi] {} (1der);
    \end{scope}
    \begin{scope}[shift=(der), xscale=.5, yscale=.7]
      \node (der1) at (-2, 2) [vert] {$\nu_\root1$} ;
      \node (der2) at (-.5, 2) [vert] {$\nu_\root2$} ;
      \node (derder) at (2, 2) [vert] {$\nu_\root\nu_{\nu_\root}$} ;
      \path (der2) -- node [midway] {$\cdots$} (derder);
      \draw (der) -- node [resi] {} (der1);
      \draw (der) -- node [resi] {} (der2);
      \draw (der) -- node [resi] {} (derder);
    \end{scope}
    \path (1der) -- node [midway] {$\cdots$} (der1);
    \path (1) -- node [midway, below left] {$\cd[](11)$} (11);

    \node[scale=1.5] at (-2, 4.5) {$\vdots$};
    \node[scale=1.5] at (-.5, 4.5) {$\vdots$};
    \node[scale=1.5] at (2, 4.5) {$\vdots$};

    \draw[loosely dashed] (-3.5, 6) -- (3.5, 6);
    \node (A) at (-3,6) [verthaut] {};
    \node (B) at (-2.5,6) [verthaut] {};
    \node (AB) at (-2.75,5.2) [verthaut] {};
    \draw (AB) -- node [resi] {} (A);
    \draw (AB) -- node [resi] {} (B);
    \node (C) at (-2,6) [verthaut] {};
    \node (CC) at (-2, 5.2) [verthaut] {};
    \draw (CC) -- node [resi] {} (C);
    \node (D) at (1.5,6) [verthaut] {};
    \node (E) at (2,6) [verthaut] {};
    \node (F) at (2.5,6) [verthaut] {};
    \node (DEF) at (2, 5.2) [verthaut] {};
    \draw (DEF) -- node [resi] {} (D);
    \draw (DEF) -- node [resi] {} (E);
    \draw (DEF) -- node [resi] {} (F);
    \node (G) at (3,6) [verthaut] {};
    \node (GG) at (3,5.2) [verthaut] {};
    \draw (GG) -- node [resi] {} (G);
    \node at (-.25, 5.6) {$\cdots$};

    \node (ajout) at (0,7) {$\partial_n$};
    \foreach \x in {(A), (B), (C), (D), (E), (F), (G)}
      \draw [densely dashed] \x -- (ajout);

    \draw [<->] (-3.5, 0) -- node [midway, left] {$n$} (-3.5,6);

    \begin{scope}[xshift=6.5cm]
      \node (rt) at (0,0) [vert] {$\root$};
      \node (aj) at (0,7) [vert] {$\partial_n$};
      \node (prtrt) at (0,-2) [vert] {$\prt\root$};
      \draw (rt) -- 
        node [midway, rectangle, minimum height=30mm, draw, fill=white]
        {$\ceff_n(t)$} (aj);
    \node [color=red] at (.5, -.3) {$I$};
    \draw  (prtrt) -- node [resi] {} (rt);
    \filldraw [color=red] (0,-0.2) -- ++(-0.1,-0.2) -- ++(0.2,0) -- cycle;
    \path (prtrt) -- node [midway, left=5] {$1$} (rt);
    \end{scope}

    \draw[loosely dotted, <->, color=blue] (ajout) -- node [midway, color=blue, fill=white] {$v(\partial_n)=U$} (aj);
    \draw[loosely dotted, <->, color=blue](root) -- node [midway, color=blue,
    fill=white]  {$v(\root)=0$} (rt);
    \draw[double, <->] (4,3.5) -- node [midway, below=3] 
    { \framebox{${\color{red}I} = \ceff_n(t) {\color{blue}U}$}} (5.5,3.5);

  \end{tikzpicture}
  \caption{\label{fig:electric_tree}On the left, a rooted tree of height $n$
    with an
    artificial parent of the root and equipped with conductances. The
    potential is $U$ at height $n$ and $0$ at the root. On the right, the
    equivalent reduced electrical network.}
\end{figure}

We first briefly recall some notions of electric networks in the case
where the network is a 
locally finite tree $t$, rooted at some vertex $\root$.
For more detailed and general statements about this theory,
see~\cite{doyle_snell} or \cite{LyonsPeres_book}.
For any vertex $x$ of $t$, associate to the edge between $x$ and its parent
$\prt{x}$ a \emph{conductance} $\cd[](x) \in \intoo{0}{\infty}$, or
alternatively a \emph{resistance} $\res[](x)$ equal to the inverse of the
conductance.
For convenience, we add an artificial parent of the root, denoted by $\prt\root$
and let $\cd[](\root) = 1$ (this is to make the root ``less special'').

Now we fix some positive integer $n$ and assume that the height of $t$ is at
least $n$. We impose a certain fixed electric potential $U$ at the
vertices at height $n$ in $t$, while the potential at the root is $0$.
As another point of view, we may connect the vertices at height $n$ to a new vertex
$\partial_n$, the new edges having infinite conductance (zero resistance), and impose
$v(\partial_n)
= U$.
This defines an electric potential $v$ on the vertices of $t$
between $\root$ and the $n$-th level of $t$.
To be more formal, we need some notations. For a vertex $x$ of $t$,
let us denote by $\prt{x}$ its parent, by $\abs{x}$ its height ($\abs{\root} =
0$), by $\numch[]{x}$ its number of children, by $x1$, $x2$, \dots,
$x{\numch[]{x}}$ its children and by
$\pi(x)$ the sum of the conductances of the edges that are incident to $x$.
The potential $v$ defined on the $n$ first levels of the tree satisfies:
\[
  v(x) =
  \begin{cases}
    0 & \text{if $x = \root$;}\\
    U & \text{if $\abs{x} = n$;}\\
    \frac{1}{\pi(x)}\pars[bigg]{ \cd[](x) v(\prt{x}) 
    + \sum_{j=1}^{\numch[]{x}} \cd[](xj)v(xj)} & \text{if $1 \leq
    \abs{x} \leq n-1$.}
  \end{cases}
\]
The last case in the previous equality
is called \emph{harmonicity} of $v$
at $x$.
Such a potential is well-known to exist and to be unique.
For $x$ in $t$, the electric current $i(x)$ flowing in the edge between $x$
and its parent $\prt{x}$ is defined by Ohm's law as
\[
  i(x) = \cd[](x) (v(x) - v(\prt{x})).
\]
(The harmonicity condition is the same as Kirchhoff's current law.)
Now let
\[
  I = \sum_{j=1}^{\numch[]{\root}} i(j)
\]
be the total current entering the tree.
It is clear that the function $U \mapsto I$ is linear. The constant
ratio $I / U$ is called
the \emph{effective conductance} of $t$ between $\root$ and its $n$-th
level and is denoted by $\ceff_n(t)$. See Figure~\ref{fig:electric_tree} for a
summary of this discussion.

The effective conductance has a pleasant and useful interpretation in terms of random
walks on the vertices of $t$.
We associate to the conductances of the edges a probability kernel $\Pq$ on $t$ in
the following way:
\begin{align*}
  \Pq(x, xi) &= \frac{\cd[](xi)}{\pi(x)} \quad \text{if $1 \leq i \leq
  \numch[]{x}$} \quad \text{and} \quad
  \Pq(x, \prt{x}) = \frac{\cd[](x)}{\pi(x)}.
\end{align*}
For $x$ in
$t$, we write $\Pq_x$ for a probability measure under which the random
sequence $(X_k)_{k\geq0}$ is a random walk starting from $x$ with probability
kernel $\Pq$ and consider the stopping times
\[
  \tau_x = \inf \setof{s\geq0}{X_s = x}, \quad
  \tau_x^+ = \inf \setof{s\geq1}{X_s = x} \quad \text{and} \quad
  \tau^{(n)} = \inf \setof{s\geq0}{\abs{X_s} = n}.
\]
Then, by the Markov property, the function
\[
  v(x) = \Pq_x ( \tau^{(n)} < \tau_\root )
\]
is the electric potential when the vertices at height $n$ have potential $1$ and
the root has potential $0$.
As a consequence, by definition of the current $i$ and, again, the Markov property,
\[
  \ceff_n(t) =
  I = \sum_{j=1}^{\numch[]{\root}} \cd[](j) \Pq_j ( \tau^{(n)} < \tau_\root )
  = \pi(\root) \Pq_\root(\tau^{(n)} < \tau_\root^+).
\]
The conductance $\ceff(t)$ of the whole tree, equal to the limit of the non-increasing
sequence $(\ceff_n(t))_{n\geq0}$, is then positive if and only if the
associated random walk is transient.

A typical choice for the conductances is $\cd[](x) = \lambda^{-\abs{x}}$, for
some fixed $\lambda > 0$.
It corresponds to the $\lambda$-biased random walk on the vertices of $t$,
introduced in \cite{Lyons_rwperco}. In
words, the walker jumps with \emph{weight} $\lambda$ to the parent of its
current position, and with weight $1$ to one of its children.
If $t$ is the tree in which every vertex has $d \geq 2$ children, this random
walk is transient if and only if $\lambda < d$, and the null recurrent case
$\lambda = d$ may be seen as critical.

In \cite{addario-berry_resi}, this infinite $d$-ary tree is considered with
the set of conductances $\cd[](x) = d^{-\abs{x}} X_x$, where the positive random
variables $(X_x)_{x\in t}$ are {i.i.d}, which
corresponds in some way to a (still recurrent) perturbation around this critical regime.
The authors prove that the expectation
$\Et[\ceff_n(t)]$ is of order $1/n$ as $n$ goes to
infinity\footnote{Actually their result is much more precise, but this suffices for the
purpose of this introduction.}.
This result has been recently extended in \cite{chen_hu_lin}
to the case of an infinite,
random \emph{Galton--Watson} trees.

In this work, we investigate the rate of decay of the sequence of random
variables
$(\ceff_n)$ in another
``critical'' setting known as the \emph{subdiffusive} (\cite{hu_shi_subdiffusive})
regime for Galton-Watson trees with random weights.

\subsection{Subdiffusive random weighted trees}

What we call an (edge-)weighted tree is a (rooted, planar) tree $t$
together with a \emph{weight function} $\wt[t] : t \setminus \{ \root \} \to
\intoo{0}{\infty}$. For a vertex $x \neq \root$ in $t$, $\wt[t](x)$ represents
the weight of the edge connecting $x$ to its parent.

We naturally associate to this weighted tree the following probability kernel:
for $x$ in $t$,
\begin{align*}
  \Pq^t(x, xi) =
  \frac{\wt[t](xi)}{1 + \sum_{j=1}^{\numch[]{x}}\wt[t](xj)}
    \quad \text{if $1 \leq i \leq
  \numch[]{x}$} \quad \text{and} \quad
  \Pq^t(x, \prt{x}) =
  \frac{1}{1 + \sum_{j=1}^{\numch[]{x}}\wt[t](xj)},
\end{align*}
that is, if a random walker is at vertex $x$,
it may jump to the $i$-th child of $x$ with weight $\wt[t](xi)$ and to the
parent of $x$ with weight $1$ (if the weights are all constant equal to
$\lambda^{-1}$, we recover the $\lambda$-biased random walk on $t$).
Recall that we also add a vertex $\prt\root$ to serve as an artificial parent to
the root (and the walk is reflected at $\prt\root$).

This random walk
is easily seen to correspond to the conductance $\cd[t]$ defined by
\[
  \forall x \in t,\quad \cd[t](x) = \prod_{\root \prec y \preceq x} \wt[t](y),
\]
where the product above is indexed by the ancestors of $x$ (including $x$) which
are distinct from $\root$.

To define a \emph{Galton-Watson tree with random weights}
consider a random finite sequence of positive
real numbers
\[
  \mathbf{A} = (A(1), \dotsc, A(\nu)),
\]
whose length $\nu \in \{0, 1, 2, \dotsc \}$ may also be random.
Define the free monoid
\[
  \words = \bigsqcup_{k\geq0} \N^k
\]
of all the finite words on the alphabet $\N = \{1, 2, \dotsc\}$, with the
convention that $\N^0$ contains only the empty word $\root$.
Now consider the family $(\mathbf{A}_x)_{x\in\words}$ of {i.i.d.}~random
sequences indexed by $\words$, whose common distribution is the law of
$\mathbf{A}$.
We build a random weighted tree $T$ in the following way: the root $\root$ of
$T$ has $\nu_\root$ children labelled $1$, $2$, ..., $\nu_\root$,
where $\nu_\root$ is the length of the random
sequence
\[
  \mathbf{A}_\root = (A_\root(1), \dotsc, A_\root(\nu_\root))
\]
and, for $1 \leq i \leq \nu_\root$, the weight $\wt[T](i)$
of the edge between $\root$ and
its child $i$ is $A_\root(i)$.
Then, proceed in the same way for the children $1$, $2$, ..., $\nu_\root$ of the
root,
in order to pick their numbers of children $\nu_1$, $\nu_2$, ...,
$\nu_{\nu_\root}$ and the weights of the corresponding edges :
\[
  \wt[T](11) = A_1(1), \dotsc, \wt[T](1\nu_1) = A_1(\nu_1),
  \wt[T](21) = A_2(1), \dotsc, \wt[T](2\nu_2) = A_2(\nu_2),
  \dotsc,
\]
and so on, so that
the weight of the edge between a vertex $xi$ in $T$ and its parent $x$
is $\wt[T](xi) = A_x(i)$.
Notice that $T$, without its weights, is a Galton-Watson tree whose reproduction law is the
distribution of $\nu$.
For this reason we denote by $\GW$ the distribution of $T$, seen as a random
variable in the space of weighted trees.

This very rich family of random walk in a random environment was introduced in
\cite{lyons_pemantle_rwre} and generalized in \cite{faraud_2011}.
The random walk of probability kernel
$\Pq^t$
may be transient or recurrent, for $\GW$-almost every weighted tree $t$,
depending on whether the convex
characteristic function
\[
  \psi(s) = \log \Et \sum^{\nu}_{i=1} A(i)^s,
  \qquad \forall s \in \R,
\]
stays positive on the interval $\intff{0}{1}$.%
\footnote{%
  For a necessary and sufficient
  condition, additional integrability conditions are needed,
  see \cite{faraud_2011}.
}
Since \cite{lyons_pemantle_rwre}, this model has attracted a lot of attention.
For the transient case, see for instance \cite{aidekon_2007} or
\cite{rou_ddroprwre}.
The recurrent case in general is studied in \cite{faraud_hu_shi} or
\cite{andreoletti_debs_number}, among many others.
The recent article
\cite{andreoletti_chen} focuses on the slow null-recurrent regime.

With a slight abuse of terminology (see below),
we call ``subdiffusive'' this model (and by extension the random tree we
work on) when the following hypotheses are satisfied:
\begin{gather*}
  \label{eq:hyp_normalized} \tag{$H_\text{norm}$}
  \psi(1) = \log \Et \sum^{\nu}_{i=1} A(i) = 0 \quad \text{and}\\
  \psi'(1) \coloneqq \Et
  \bracks*{\sum^{\nu}_{i=1} A(i)\log A(i)}   \in
  \intfo{-\infty}{0}.
  \label{eq:hyppsip1} \tag{$H_\text{derivative}$}
\end{gather*}

To state our main result about the conductance in this case, we need to
introduce the additive martingale, also called Mandelbrot's multiplicative
cascade, or Biggins' martingale,
$(M_n(T))_{n\geq0}$, defined by
\[
  M_n(T) = \sum_{\abs{x} = n} \prod_{\root \prec y \preceq x} \wt[T](y)
  = \sum_{\abs{x} = n} \cd[T](x).
\]
It is easily seen to be a martingale with respect to the filtration
$(\mathcal{F}_n)_{n\geq1}$ defined by
\[
  \mathcal{F}_n = \sigma \setof{\mathbf{A}_x}{\abs{x} \leq n - 1}.
\]
By Biggins' theorem (see also \cite{kahane_peyriere, lyons_biggins})
it converges almost surely and in $L^1$ to a
random variable $M_\infty(T)$ which is positive on the event of non-extinction,
provided we also assume the following integrability hypothesis:
\begin{equation}
  \label{eq:hyp_bigg} \tag{$H_{X\log X}$}
  \Et \bracks[\bigg]{ \pars[\Big]{\sum^{\nu}_{i=1} A(i)} \log^+
\pars[\Big]{\sum^{\nu}_{i=1} A(i)}} < \infty.
\end{equation}
The non-degeneracy of $M_\infty(T)$ also allows to prove that, under our
assumptions, the random walk on
$T$ is almost surely \emph{null-recurrent} (we provide a short proof of this
well-known fact in the appendix for completeness).

Now, we could expect results
similar to \cite{addario-berry_resi,chen_hu_lin}. This is not exactly true:
there will be different behaviors depending on the value of
\[
  \kappa = \inf \setof{s > 1}{\psi(s) = 0} \in \intof{1}{\infty}.
\]
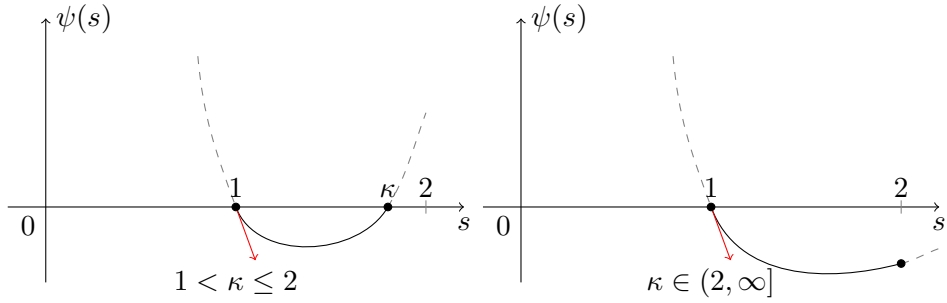
\begin{figure}
  \centering
\begin{tikzpicture}[scale=2.5]
  \draw[->] (-0.2,0) -- (2.2,0);
  \draw[->] (0,-.4) -- (0,1);
  \node[anchor=north east] at (0,0) {$0$};
  \node[anchor=north] at (2.2,0) {$s$};
  \node[anchor=west] at (0,1) {$\psi(s)$};
  \node[anchor=south] at (2,0) {$2$};
  \draw[help lines] (2,-1pt) -- (2,+1pt);
  \draw (1,0) to[out=-70,in=180+60] (1.8,0);
  \draw[->,red] (1,0) -- ++(-70:.3);
  \draw[thin, dashed, gray] (.8,.8) to[out=-85,in=180-70] (1,0);
  \draw[thin, dashed, color=gray] (1.8,0) to[out=60,in=180+70] (2,.5);
  \filldraw (1,0) circle [radius=.02] node[anchor=south] {$1$};
  \filldraw (1.8,0) circle [radius=.02] node[anchor=south] {$\kappa$};
  \node at (1,-.4) {$1 < \kappa \leq 2$};
  \begin{scope}[xshift=2.5cm]
    \draw[->] (-0.2,0) -- (2.2,0);
    \draw[->] (0,-.4) -- (0,1);
    \node[anchor=north east] at (0,0) {$0$};
    \node[anchor=north] at (2.2,0) {$s$};
    \node[anchor=west] at (0,1) {$\psi(s)$};
    \draw (1,0) to[out=-70,in=180+15] (2,-0.3);
    \draw[->,red] (1,0) -- ++(-70:.3);
    \draw[thin, dashed, gray] (.8,.8) to[out=-85,in=180-70] (1,0);
    \draw[thin, dashed, gray] (2,-.3) to[out=15,in=180+20] (2.2,-.22);
    \filldraw (1,0) circle [radius=.02] node[anchor=south] {$1$};
    \filldraw (2,-0.3) circle [radius=.02];
    \draw[help lines] (2,-1pt) -- (2,+1pt);
    \node[anchor=south] at (2,0) {$2$};
    \node at (1,-.4) {$\kappa \in\intof{2}{\infty}$};
  \end{scope}
\end{tikzpicture}
\caption{Schematic behavior of $\psi$ under our hypotheses}
\label{fig:psi_subdiff}
\end{figure}
Our work uses the tail probabilities and some moments of the random variable
$M_\infty(T)$, which
depend on the value of $\kappa$.

For two positive functions $f$ and $g$ defined on a neighborhood of $+\infty$,
we write
$f(x) \oforder g(x)$ as $x$ goes to infinity, when, for some constants $c$ and
$C$ in $\intoo{0}{\infty}$, for $x$ large enough,
\[
  cg(x) \leq f(x) \leq Cg(x).
\]

Under the following integrability hypothesis:
\begin{equation*}
  \label{eq:hypkappa}\tag{$H_\kappa$}
  \begin{gathered}
  \Et \bracks[\Big]{\pars[\Big]{ \sum^{\nu}_{i=1} A(i)}^\kappa}
  +
\Et \bracks[\Big]{ \sum^{\nu}_{i=1} A(i)^\kappa \log^+ A(i)} < \infty,
  \quad
  \text{if $1 < \kappa \leq 2$},
  \\
  \Et \bracks[\Big]{ \pars[\Big]{ \sum^{\nu}_{i=1} A(i)}^2} < \infty, \quad
  \text{if $\kappa \in \intof{2}{\infty}$},
  \end{gathered}
\end{equation*}
we owe to \cite[Theorem~2.1, Theorem~2.2]{liu_generalized} the following
fact:
\begin{fact}\label{fact:liutail}
  If \eqref{eq:hyp_normalized},
  \eqref{eq:hyppsip1} and \eqref{eq:hypkappa} are satisfied, then
  the random variable $M_\infty$ has finite moments of order $p$ for all $p$ in
  $\intfo{1}{\kappa}$ if $\kappa \leq 2$ and for all $p$ in $\intff{1}{2}$ if
  $\kappa > 2$.

  If $\kappa \leq 2$, the asymptotic tail probability of $M_\infty$ satisfies
    \begin{equation}
      \label{eq:tailM}
      \Pt( M_\infty > s ) \oforder_{s\to\infty} s^{-\kappa}.
    \end{equation}
\end{fact}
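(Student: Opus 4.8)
The plan is to base everything on the distributional fixed-point equation satisfied by $M_\infty$. Reading the one-step decomposition $M_{n+1}(T)=\sum_{\abs{x}=1}\wt[T](x)M_n(T^{(x)})$ and letting $n\to\infty$ (legitimate by the $L^1$-convergence in Biggins' theorem), one obtains
\[
  M_\infty \overset{\mathrm d}{=} \sum_{i=1}^{\nu} A(i)\,M_\infty^{(i)},
\]
where, conditionally on $\mathbf{A}=(A(1),\dots,A(\nu))$, the variables $M_\infty^{(i)}$ are i.i.d.\ copies of $M_\infty$, independent of $\mathbf{A}$. I would use throughout that $\Et[M_\infty]=1$, that $\psi$ is convex with $\psi(1)=\psi(\kappa)=0$, hence $\psi<0$ on $\intoo{1}{\kappa}$ and $\psi'(\kappa)>0$.

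For the moments, fix $p\in\intof{1}{2}$ with $p<\kappa$ and decompose $M_n(T)$ along its martingale increments $M_{k+1}(T)-M_k(T)=\sum_{\abs{x}=k}\cd[T](x)\,(S_x-1)$, where $S_x=\sum_i A_x(i)$ and the $S_x-1$ are centred, i.i.d., and independent of $\mathcal F_k$. A martingale $L^p$-inequality (valid for $p\le2$, of von Bahr--Esseen/Burkholder type) then bounds $\Et[M_n(T)^p]$ by a constant times $1+\Et[\abs{S-1}^p]\sum_{k\ge0}\Et\bigl[\sum_{\abs{x}=k}\cd[T](x)^p\bigr]=1+\Et[\abs{S-1}^p]\sum_{k\ge0}e^{k\psi(p)}$, which is finite since $\psi(p)<0$ and $\Et[\abs{S-1}^p]<\infty$ by \eqref{eq:hypkappa}. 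Letting $n\to\infty$ gives $\Et[M_\infty^p]<\infty$; for $\kappa\le2$ this covers all $p\in\intfo{1}{\kappa}$ (the endpoint $p=1$ being Biggins' theorem), and for $\kappa>2$ it covers $p=2$, whence $p\in\intff{1}{2}$ by Jensen. For the converse, needed only to locate the tail exponent, the fixed-point equation and $(\sum x_i)^p\ge\sum x_i^p$ give $\Et[M_\infty^p]\ge e^{\psi(p)}\Et[M_\infty^p]$ with $e^{\psi(p)}>1$ when $p>\kappa$, so $\Et[M_\infty^p]\in\{0,\infty\}$, hence $=\infty$ by non-degeneracy of $M_\infty$.

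For the tail when $\kappa\le2$ I would work with the Laplace transform $\phi(\lambda)=\Et[e^{-\lambda M_\infty}]$, which by the fixed-point equation solves $\phi(\lambda)=\Et\bigl[\prod_{i=1}^{\nu}\phi(\lambda A(i))\bigr]$. Writing $g(u)=u-1+e^{-u}\ge0$ and using $\Et[M_\infty]=1$ one has $\phi(\lambda)-1+\lambda=\Et[g(\lambda M_\infty)]$; since $g(u)\oforder\min(u^2,u)$, the estimate $\Pt(M_\infty>s)\oforder s^{-\kappa}$ is equivalent, through a Karamata--de Haan Tauberian theorem, to $\phi(\lambda)-1+\lambda\oforder\lambda^\kappa$ as $\lambda\to0^+$. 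Proving this last two-sided estimate is the crux. Substituting $\phi(\lambda)=1-\lambda+r(\lambda)$ into the functional equation and expanding the product, the linear terms cancel because $\Et[\sum_i A(i)]=1$ and the $\lambda^\kappa$-term is reproduced with an unchanged coefficient because $\Et[\sum_i A(i)^\kappa]=e^{\psi(\kappa)}=1$ — this is exactly why the exponent is $\kappa$ — while the genuinely new contributions (the bilinear terms $\sum_{i<j}\phi(\lambda A(i))\phi(\lambda A(j))$ and the gap between $g$ and its linear and quadratic parts) are $O(\lambda^2)$. One then pins down $r(\lambda)/\lambda^\kappa$ from above and below by turning the functional equation into a contraction on a neighbourhood of $0$ and iterating: when $\kappa<2$ the $O(\lambda^2)$ errors are negligible, and when $\kappa=2$ they are of the critical size and the extra integrability $\Et[\sum_i A(i)^\kappa\log^+A(i)]<\infty$ in \eqref{eq:hypkappa} is precisely what excludes a spurious $\lambda^2\log(1/\lambda)$ correction. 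This delicate bootstrap is the main obstacle; it is what \cite[Theorems~2.1 and 2.2]{liu_generalized} carry out.

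Alternatively, the lower bound $\Pt(M_\infty>s)\gtrsim s^{-\kappa}$ can be obtained directly from the branching structure, bypassing the transform: iterating the fixed-point equation to level $n$ gives $M_\infty\ge\cd[T](x)M_\infty^{(x)}$ for every $\abs{x}=n$, with $M_\infty^{(x)}$ independent of $\cd[T](x)$. A truncated second-moment (Paley--Zygmund) argument for $\#\setof{\abs{x}=n}{\cd[T](x)>t}$, whose mean is computed by the many-to-one identity and is $\oforder t^{-\kappa}$ for the choice $n\approx(\log t)/\psi'(\kappa)$ precisely because $\psi(\kappa)=0$, shows $\Pt\bigl(\exists\,\abs{x}=n:\cd[T](x)>t\bigr)\gtrsim t^{-\kappa}$, and combining with $\Pt(M_\infty>\varepsilon)>0$ for some $\varepsilon>0$ gives the claim.
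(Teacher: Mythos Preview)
The paper does not prove this fact at all: it is stated purely as a citation of \cite[Theorems~2.1 and 2.2]{liu_generalized}, so there is no in-paper argument to compare your proposal against. Your sketch is therefore supplementary material outlining the cited result, and you yourself flag that the hard part --- the two-sided tail bound --- is exactly ``what \cite[Theorems~2.1 and 2.2]{liu_generalized} carry out.''

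On the content of the sketch: your moment argument via martingale increments and a von~Bahr--Esseen/Burkholder bound is correct and self-contained. The paper does, incidentally, give its own short proof of $\sup_n\Et[M_n^p]<\infty$ later, inside the proof of Lemma~\ref{lem:ineq_phi}, using instead a one-step recursion $\Et[M_{n+1}^p]\le e^{\psi(p)}\Et[M_n^p]+\Et[(\sum_i A(i))^p]$ together with Neveu's inequality~\eqref{eq:ineqneveu}; both routes work. For the tail, your Laplace-transform outline is plausible in spirit, but the claimed equivalence ``$\phi(\lambda)-1+\lambda\oforder\lambda^\kappa$ iff $\Pt(M_\infty>s)\oforder s^{-\kappa}$'' is not an off-the-shelf Tauberian statement: standard Karamata/de~Haan theorems transfer regular variation, not bare two-sided bounds, and the needed regularity is part of what must be established. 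Liu's actual argument proceeds via implicit renewal theory on the fixed-point equation rather than a pure transform bootstrap. Your alternative branching/Paley--Zygmund route to the lower bound $\Pt(M_\infty>s)\gtrsim s^{-\kappa}$ is a genuine independent argument and is correct.
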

In the previous statement, as in the rest of this work, we feel free to omit
$T$ as an argument or as a superscript and
write $\ceff_n$ for $\ceff_n(T)$,
$\wt[]$ for $\wt[T]$, $\nu$ for $\nu^T$, \dots, when there is no risk of
confusion.

Throughout this work, we will assume that \eqref{eq:hyp_normalized},
\eqref{eq:hyppsip1} and \eqref{eq:hypkappa} (which supersedes
\eqref{eq:hyp_bigg}) hold.
These assumptions are summed up in Figure~\ref{fig:psi_subdiff}.

One of the most striking result about this regime is
given (under some additional assumptions) in \cite{hu_shi_subdiffusive}:
for $\GW$-almost every infinite $t$, $\Pq^t$-almost surely,
\[
  \lim_{n\to\infty} \frac{\log \max_{0\leq i\leq n}\abs{X_i}}{\log n}
  = 1 - \frac{1}{\kappa \mi 2},
\]
hence the name ``subdiffusive'' in the case $\kappa < 2$, that we improperly
(but conveniently) extend to this whole ``fast, null-recurrent'' case.
A central limit theorem can be found in \cite{faraud_2011}.
More recently, Aïdékon and de Raphélis
(\cite{aidekon_raphelis_scaling,de_raphelis_scaling_subdiffusive})
have proved the joint
convergence of the renormalized height of the walk together with its trace
towards a
continuous-time process and the real forest encoded by this process.

Regarding the conductance, our main result is the following:
\begin{theorem}
  \label{thm}
  Under the hypotheses \eqref{eq:hyp_normalized}, \eqref{eq:hyppsip1} and
  \eqref{eq:hypkappa}, as $n$ goes to infinity,
  \begin{align*}
    \Et[\ceff_n] &\oforder \frac{1}{n^{1/(\kappa - 1)}}
    \quad \text{if $1 < \kappa < 2$ ;}
    \\
    \Et[\ceff_n] &\oforder \frac{1}{n\log n}
    \quad \text{if $\kappa = 2$ and}
    \\
    \Et[\ceff_n] &\sim \frac{1}{n\Et[M_\infty^2]}
= \frac{1 - \Et \bracks[\big]{\sum_{i=1}^{\nu} A(i)^2}}{%
    n \Et \bracks[\Big]{ \sum_{1 \leq i \neq j \leq \nu}^{} A(i)A(j) }}
    \quad \text{if $\kappa > 2$,}
  \end{align*}
  and, in any case, almost surely,
  \[
    \lim_{n\to\infty} \ceff_n / \Et[\ceff_n] = M_\infty.
  \]
  Moreover, the above convergence also holds in $L^p$
  for $p \in \intfo{1}{\kappa}$ if
  $1 < \kappa \leq 2$ and in $L^2$ if $\kappa > 2$.
\end{theorem}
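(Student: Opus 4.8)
Here is the route I would take. The key structural fact is a series–parallel reduction that turns the problem into the analysis of a scalar recursion.

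\textbf{The recursion.} Writing $T^{(i)}$ for the subtree rooted at the $i$-th child of $\root$ and using that the conductances carried by $T^{(i)}$ inside $T$ are those of $T^{(i)}$ rescaled by the weight $A(i)$ of its incoming edge, one gets the exact recursion
\[
  \ceff_n(T)=\sum_{i=1}^{\nu} A(i)\,g\!\bigl(\ceff_{n-1}(T^{(i)})\bigr),\qquad g(x)=\frac{x}{1+x},
\]
with the convention $\ceff_0=+\infty$, so that $\ceff_1=M_1$. Conditionally on $\mathcal F_1$ the variables $\ceff_{n-1}(T^{(i)})$, $i=1,\dots,\nu$, are i.i.d.\ copies of $\ceff_{n-1}$ and independent of $(\nu,\mathbf A)$; since $g(x)\le x$, an induction gives $\ceff_n\le M_n$ and $\ceff_{n+1}\le\ceff_n$, hence $\ceff_n\downarrow 0$ (null-recurrence). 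As $\Et[\sum_iA(i)]=e^{\psi(1)}=1$, taking expectations yields the scalar identity
\[
  c_n:=\Et[\ceff_n]=\Et[g(\ceff_{n-1})]=c_{n-1}-\Et\!\left[\tfrac{\ceff_{n-1}^{\,2}}{1+\ceff_{n-1}}\right],
\]
and convexity of $x\mapsto x^2/(1+x)$ with Jensen gives $c_{n-1}-c_n\ge c_{n-1}^2/2$, so $0<c_n\le 2/n$.

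\textbf{Uniform control of $\ceff_n/c_n$ (the main obstacle).} Put $L_n:=\ceff_n/c_n$, so $\Et[L_n]=1$ and $L_n=\frac{c_{n-1}}{c_n}\sum_i\frac{A(i)\,L_{n-1}(T^{(i)})}{1+\ceff_{n-1}(T^{(i)})}$ with $(L_{n-1}(T^{(i)}))_i$ i.i.d.\ copies of $L_{n-1}$ given $\mathcal F_1$. The core technical step is a bound on the laws of $(L_n)$ \emph{uniform in $n$}: $\sup_n\Et[L_n^{\,p}]<\infty$ for $p\in\intfo{1}{\kappa}$ when $\kappa\le 2$ and for $p=2$ when $\kappa>2$; and, when $\kappa\le 2$, uniform tail bounds $\Pt(\ceff_n>t)\le C\,c_n^{\kappa}t^{-\kappa}$ for $t\ge c_n$, with a matching lower bound $\Pt(\ceff_n>t)\oforder c_n^{\kappa}t^{-\kappa}$ for $c_n\lesssim t\lesssim1$. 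This is proved by comparing the recursion for $L_n$ with the smoothing–transform identity $M_\infty\overset{d}{=}\sum_iA(i)M_\infty(T^{(i)})$, feeding in \eqref{eq:tailM} from Fact~\ref{fact:liutail} and the a priori consequence $c_{n-1}/c_n\to1$ of the first step; the perturbations $1/(1+\ceff_{n-1}(T^{(i)}))=1-o(1)$ and $c_{n-1}/c_n-1=o(1)$ are absorbed because $\ceff_{n-1}\to0$ (quantitatively $c_{n-1}\Et[L_{n-1}^2]\to0$ in every regime). I expect this to be where the work lies: a naive bound on $\Et\lvert L_n-M_\infty\rvert$ does not close, the smoothing transform being no strict $L^p$-contraction, so one must run the comparison (and a bootstrap for the a priori boundedness) at the level of tails and truncated moments, propagating it along the tree.

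\textbf{Asymptotics of $c_n$.} Now estimate $\Et[\ceff_{n-1}^2/(1+\ceff_{n-1})]$ using the previous step. Since $x^2/(1+x)$ is comparable to $x^2\wedge x$ and $\Et[\ceff_{n-1}\indic{\ceff_{n-1}>1}]=\int_1^\infty\Pt(\ceff_{n-1}>t)\,dt+\Pt(\ceff_{n-1}>1)\lesssim c_{n-1}^{\kappa}$ by the upper tail bound, the quantity is comparable to $\int_0^\infty(2t\wedge1)\Pt(\ceff_{n-1}>t)\,dt$; substituting $t=c_{n-1}s$ and using $\Pt(\ceff_{n-1}>c_{n-1}s)\oforder s^{-\kappa}\wedge1$ one gets $\oforder c_{n-1}^{\kappa}$ if $1<\kappa<2$, $\oforder c_{n-1}^2\log(1/c_{n-1})$ if $\kappa=2$, and $=\Et[M_\infty^2]c_{n-1}^2(1+o(1))$ if $\kappa>2$. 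For the sharp constant in the last case one squares the recursion, getting $\Et[\ceff_n^2]=e^{\psi(2)}\Et[g(\ceff_{n-1})^2]+\Et[\sum_{i\ne j}A(i)A(j)]\,c_n^2$; dividing by $c_n^2$ and using $c_{n-1}/c_n\to1$, $g(x)=x+o(x)$ turns this into the affine recursion $\Et[L_n^2]=e^{\psi(2)}\Et[L_{n-1}^2]+\Et[\sum_{i\ne j}A(i)A(j)]+o(1)$, which, as $e^{\psi(2)}<1$ when $\kappa>2$, converges to $\Et[\sum_{i\ne j}A(i)A(j)]/(1-e^{\psi(2)})=\Et[M_\infty^2]$. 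Thus $c_{n-1}-c_n$ is $\oforder c_{n-1}^{\kappa}$, $\oforder c_{n-1}^2\log(1/c_{n-1})$, or $\Et[M_\infty^2]c_{n-1}^2(1+o(1))$ according to the regime; solving these recurrences (via $c_n^{1-\kappa}$ in the first, via $c_n^{-1}$ in the last two) gives $c_n\oforder n^{-1/(\kappa-1)}$, $c_n\oforder(n\log n)^{-1}$, and $c_n\sim(n\Et[M_\infty^2])^{-1}$, the latter rewritten via $\Et[M_\infty^2]=\Et[\sum_{i\ne j}A(i)A(j)]/(1-\Et[\sum_iA(i)^2])$.

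\textbf{Convergence to $M_\infty$.} By the uniform bound, $(L_n)$ is tight and $(L_n^{\,p})$ uniformly integrable for the relevant exponents. Letting $n\to\infty$ in the recursion for $L_n$ — with $c_{n-1}/c_n\to1$, $\ceff_{n-1}(T^{(i)})\to0$ and $c_{n-1}\Et[L_{n-1}^2]\to0$ disposing of the perturbation — any subsequential distributional limit $L_\infty$ solves $L_\infty\overset{d}{=}\sum_iA(i)L_\infty(T^{(i)})$ with $\Et[L_\infty]=1$; uniqueness of the mean-one fixed point of the smoothing transform (Biggins' theorem under \eqref{eq:hyp_normalized}, \eqref{eq:hyppsip1}, \eqref{eq:hyp_bigg}) forces $L_\infty\overset{d}{=}M_\infty$, so $L_n\to M_\infty$ in law, then in $L^p$ for $p\in\intfo{1}{\kappa}$ (resp.\ $L^2$ if $\kappa>2$) by uniform integrability. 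For the almost sure statement, iterating $g(x)=x-x^2/(1+x)$ along the tree gives $\ceff_n=M_n-\sum_{m=2}^{n}\sum_{\abs{x}=n-m}\cd[](x)\,W_m(T^{(x)})$ with $0\le W_m(T^{(x)})\le\sum_iA_x(i)\,\ceff_{m-1}(T^{(xi)})^2$, and the uniform control of the second step shows this correction sum equals $M_n-c_nM_\infty+o(c_n)$ almost surely, i.e.\ $\ceff_n/c_n\to M_\infty$ a.s.; alternatively, almost sure convergence along a geometrically spaced subsequence (from summable $L^p$-errors) is extended to the full sequence using $\ceff_{n+1}\le\ceff_n$ and $c_{n+1}/c_n\to1$. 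This last point is the secondary difficulty.
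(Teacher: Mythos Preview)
Your outline is sound and matches the paper at the level of the recursion $c_n=\Et[g(\ceff_{n-1})]$, the passage to $c_{n-1}-c_n=c_{n-1}\,\Et[\phi_{a_{n-1}}(L_{n-1})]$, and the elementary analysis turning bounds on this quantity into the stated rates. The a.s.\ argument via a subsequence and monotonicity is also what the paper does. But the proposal has a real gap at the point you yourself flag as ``where the work lies''.

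\textbf{What is missing.} Your Step~2 asserts uniform $L^p$ and two-sided $\kappa$-tail control of $L_n=\ceff_n/c_n$ by ``comparing with the smoothing transform'' and a ``bootstrap'', without saying how the bootstrap closes. Two concrete problems: (i) the statement ``$c_{n-1}/c_n\to1$ is an a priori consequence of the first step'' is not justified; $c_n\le 2/n$ alone does not prevent oscillation of ratios, and you need some control of $\Et[\phi_{a_{n-1}}(L_{n-1})]$ to get it, which is precisely what you are trying to prove; (ii) the claim ``$c_{n-1}\Et[L_{n-1}^2]\to0$ in every regime'' is unfounded when $\kappa<2$: $\Et[M_\infty^2]=\infty$ there, $\psi(2)>0$, and nothing in your argument bounds $\Et[L_{n-1}^2]$. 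Without these, the perturbative comparison of $L_n$ with the fixed-point equation does not get off the ground.

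\textbf{What the paper does instead.} The paper short-circuits the whole bootstrap by a convexity comparison: for any convex, regularly varying $\phi$,
\[
  \Et\bigl[\phi(\langle\ceff_n\rangle)\bigr]\ \le\ \Et\bigl[\phi(M_n)\bigr]\ \le\ \Et\bigl[\phi(M_\infty)\bigr],
\]
the first inequality coming from the elementary fact $\Et[\phi(\langle\xi/(1+\xi)\rangle)]\le\Et[\phi(\langle\xi\rangle)]$ (a symmetrisation trick of Hu--Shi) iterated along the tree. This single lemma gives, for free, the upper bound $\Et[\phi_{a_{n-1}}(L_{n-1})]\le\varphi_1(a_{n-1})$ (hence the lower bound on $c_n$ and $c_{n-1}/c_n\to1$), and the uniform bound $\sup_n\Et[L_n^p]\le\Et[M_\infty^p]$ for $p<\kappa$ (or $p=2$ if $\kappa>2$). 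For the matching lower bound on $\Et[\phi_{a_{n-1}}(L_{n-1})]$ and the $L^p$ convergence, the paper iterates the recursion $k_n\asymp\log a_n$ times to write
\[
  \langle\ceff_n\rangle=\tfrac{a_n}{a_{n-k_n}}M_\infty+\tfrac{a_n}{a_{n-k_n}}X_{k_n,n}-\sum_{j\le k_n}\tfrac{a_n}{a_{n-j}}Y_{j,n},
\]
where $X_{k_n,n}$ is a centred sum (controlled by von Bahr--Esseen) and each $Y_{j,n}$ a nonnegative sum (controlled by Neveu's inequality); the explicit $L^p$ bounds $\lnorm{p}{X_{k_n,n}}+\sum_j\lnorm{p}{Y_{j,n}}\le C\,a_n^{1-(\kappa\wedge2)/p}$ then yield both $L^p$-convergence of $L_n$ to $M_\infty$ and the uniform lower tail bound $\Pt(L_n>r)\ge c_0 r^{-\kappa}$ on $[1,\delta_0 a_n]$ needed for the upper bound on $c_n$.

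\textbf{Summary.} Your route via uniqueness of the smoothing-transform fixed point for the distributional limit is a legitimate alternative to the paper's direct $L^p$ estimate, but it still needs the uniform moment/tail input of Step~2, and that input is not established in your proposal. The missing idea is the convexity comparison $\Et[\phi(\langle\ceff_n\rangle)]\le\Et[\phi(M_\infty)]$; once you have it, your circular dependence between ``$c_{n-1}/c_n\to1$'' and ``$\sup_n\Et[L_n^p]<\infty$'' dissolves, and the rest of your outline goes through.
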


Our method is almost entirely analytic and inspired by
\cite{hu_local_times_subdiff}. The asymptotic analysis of the sequence
$(\Et[\ceff_n])$ relies heavily on \Cref{lem:un}.
\begin{remark}
  The similarity of our almost sure convergence result
  with Theorem 1.1 in \cite{chen_hu_lin} is not very surprising, at least
  heuristically. Indeed, in our setting, for $n \geq 1$, the conductance
  satisfies the distributional equation (see \eqref{eq:ceff2}):
  \begin{equation*}
    \frac{\ceff_n}{\Et[\ceff_n]} = \frac{\Et[\ceff_{n - 1}]}{\Et[\ceff_n]}
    \sum_{i = 1}^\nu A(i) \frac{\ceff_mentioned before Remark~1{n - 1}^{(i)}}{\Et[\ceff_{n - 1}]}\times
    \frac{1}{1 + \ceff_{n - 1}^{(i)}},
  \end{equation*}
  where $\ceff_{n-1}^{(1)}, \ceff_{n-1}^{(2)}, ...$ are i.i.d. copies of $\ceff_n$.
It is thus reasonable to expect an almost sure limit, say $X$, of
$\ceff_n / \Et[\ceff_n]$ to satisfy the distributional recursive equation
\begin{equation*}
  X \stackrel{(d)}{=} \sum_{i = 1}^{\nu} A(i) X^{(i)},
\end{equation*}
where $X^{(1)}, X^{(2)}, ...$ are i.i.d. copies of $X$, independent of
$(A(1), ..., A(\nu))$. In our case, the only solution to this equation is
$M_\infty$.

In~\cite{chen_hu_lin}, the analogous recursive equation (2.3) points towards the
distributional recursive equation satisfied by the limit $W$ of the usual
Galton-Watson martingale.
\end{remark}
\begin{remark}
  Theorem~2.2{} in \cite{liu_generalized} is actually more precise. If one
  further assumes:
  \begin{equation}
    \text{there is no $h > 0$ such that}\quad
    \Pt( (A(1), \dotsc, A(\nu)) \in \N^\nu h) = 1
    \label{eq:hyp_nonlatt}\tag{$H_{\text{non-lattice}}$}
  \end{equation}
  (which is called the ``non-lattice case''), then, there exists $c_\kappa > 0$
  such that
  \[
    \lim_{s\to\infty} s^\kappa \Pt(M_\infty > s) = c_\kappa.
  \]
  Therefore, one could expect that, under the assumption~\eqref{eq:hyp_nonlatt},
  we may also obtain asymptotic equivalences of $\Et[\ceff_n]$,
  for $\kappa \in \intof{1}{2}$.
  Unfortunately, our method was not powerful enough to yield such a
  result: our lower bound of the tail probabilities of
  $\ceff_n/\Et[\ceff_n]$ (see Lemma~\ref{lem:tailceff}) is not sharp enough.
\end{remark}
\begin{acknowledgements}
  This work was part of the author's PhD thesis. The author would like to warmly
  thank
  Yueyun Hu, one of his supervisors, for introducing him to
  this problem and for constant guidance.
  The author also wishes to thank an anonymous referee for his or her very
  thorough work and valuable report.
\end{acknowledgements}

\section{Algebraic identities and lower bound}

In order to use the branching property, we introduce
for any weighted tree $t$,
for any vertex $x$ of $t$, the reindexed subtree of $t$ starting from $x$:
\[
  t[x] = \setof{ y \in \words }{ xy \in t }
  \quad \text{with weights given by} \quad
  \wt[{t[x]}](y) = \wt[t](xy), \quad \forall y\in t[x].
\]
We denote, for any weighted tree $t$ and $n\geq0$,
\[
  \ceffprt_n(t) = \Pq^t_\root (\tau^{(n)} < \tau_{\prt\root}),
\]
which is the conductance between $\prt\root$ and the $n$-th level of $t$.

Now, by the Markov property (or by the law of conductances in parallel),
for any $n\geq1$,
\begin{equation}
  \label{c_beta}
  \ceff_n(t) = \sum_{i=1}^{\numch[t]{\root}} \wt[t](i) \ceffprt_{n-1}(t[i]),
\end{equation}
while by the law of resistances in series,
\begin{equation}
  \label{beta_c}
  \ceffprt_n(t) = \frac{\ceff_n(t)}{1 + \ceff_n(t)}.
\end{equation}
Combining these identities gives, for all $n\geq2$,
\begin{equation}
  \label{eq:ceff2}
  \ceff_n(t) = \sum_{i=1}^{\numch[t]{\root}} \wt[t](i) \frac{\ceff_{n-1}(t[i])}{1 +
  \ceff_{n-1}(t[i])}.
\end{equation}
By the branching property, and the hypothesis \eqref{eq:hyp_normalized} we
already obtain the recursive equation:
\begin{equation}
  \label{eq:ceff1}
  \Et[\ceff_n] = \Et\bracks[\Big]{\sum_{i=1}^\nu
  A(i)}\Et\bracks[\Big]{\frac{\ceff_{n-1}}{1+\ceff_{n-1}}} =
  \Et\bracks[\Big]{\frac{\ceff_{n-1}}{1+\ceff_{n-1}}}.
\end{equation}

From now on, we let, for $n\geq1$,
\[
  u_n = \Et [\ceff_n] \quad \text{and} \quad a_n = 1 / u_n.
\]
Moreover, for any random variable $\xi$ such that $\Et[\xi]$ exists in
$\intoo{0}{\infty}$, we define the renormalized random variable $\renorm{\xi}$
by
$
  \renorm{\xi} = \xi/\Et[\xi].
$

Going back to \eqref{eq:ceff1}, we obtain
\begin{align*}
  u_n = \Et\bracks[\Big]{\frac{\ceff_{n-1}}{1+\ceff_{n-1}}}
  = u_{n-1} - \Et\bracks[\Big]{\frac{\ceff_{n-1}^2}{1+\ceff_{n-1}}}
= u_{n-1} -
u_{n-1}\Et\bracks[\Big]{\frac{\renorm{\ceff_{n-1}}^2}{a_{n-1}+\renorm{\ceff_{n-1}}}}.
\end{align*}
Introducing, for $a>0$ the (convex) function $\phi_a : x \mapsto
x^2/(a+x)$, the previous equality becomes
\begin{equation}
  \label{eq:recun}
  u_{n-1} - u_n = u_{n-1} \Et \bracks{\phi_{a_{n-1}}\renorm{\ceff_{n-1}}},
\end{equation}
which is key in the rest of this work.

The rough idea here, is that we expect $\renorm{\ceff_{n}}$ to be ``close''
to $M_\infty$ so that, as $n$ is large,
\[
  \Et \bracks{\phi_{a_{n-1}} \renorm{\ceff_{n-1}}} \approx
  \Et \bracks{\phi_{a_{n-1}} (M_\infty)}.
\]
Indeed, we will prove that at least one of the inequalities is correct in this
heuristics.
To do so, we need the following fact:
\begin{fact}
  \label{fact:ineqconvexrenorm}

  Let $\xi$ be a non-negative random
  variable such that $\E[\xi]$ is in $\intoo{0}{\infty}$.
  Let $\phi : \R_+ \to \R_+$ be a differentiable,
  convex function.
  Assume that
  \begin{equation}
    \E \bracks*{\phi\renorm{\xi}} < \infty \implies \E
    \bracks*{\phi((1+\varepsilon)\renorm{\xi})}
    < \infty \text{ for some $\varepsilon > 0$.}
    \label{eq:hypfactphi1}\tag{*}
  \end{equation}
  Then,
  \begin{equation*}
    \E \bracks*{ \phi\renorm*{\frac{\xi}{1+\xi}} }
    \leq
    \E \bracks*{\phi\renorm{\xi}}.
  \end{equation*}
\end{fact}
This fact itself is already stated in \cite[Proof of
Lemma~3.1]{hu_local_times_subdiff}
and is mostly a consequence of 
\cite[Formula~3.3]{hu_shi_subdiffusive}.
However, since the statement of this formula is rather long, we give a more
direct but less general proof in the appendix.
\begin{remark}
  Hypothesis~\eqref{eq:hypfactphi1} is satisfied whenever
  there exist $C > 0$ and $\varepsilon > 0$
  such that, for any $x$ large enough,
  \begin{equation}
    \phi((1 + \varepsilon)x) \leq C \phi(x).
    \label{eq:hypfactphi2}\tag{**}
  \end{equation}
  Furthermore, if $\phi : \R_+ \to \R_+$ is a convex, differentiable function
  satisfying \eqref{eq:hypfactphi2}, so is $x \mapsto \phi(ax + b)$, for any $a$ and $b$ in
  $\R_+$.
\end{remark}
\begin{lemma}
  \label{lem:ineq_phi}
  Let $\phi : \R_+ \to \R_+$ be \emph{any} convex,
  differentiable function satisfying \eqref{eq:hypfactphi2}.
  Then, for any $n\geq1$,
  \[
    \Et \bracks{\phi\renorm{\ceff_n}} \leq \Et \bracks{\phi(M_n)}.
  \]
  Moreover, whenever for all $x\geq0$, $\phi(x) \leq Cx^p$, for some constant
  $C>0$, for
  $p\in \intoo{1}{\kappa}$ if $\kappa \leq 2$, and for $p = 2$ if $\kappa > 2$, one
  has
  \[
    \Et \bracks{\phi\renorm{\ceff_n}} \leq \Et \bracks{\phi(M_n)}
    \leq \Et \bracks{\phi(M_\infty)}.
  \]
\end{lemma}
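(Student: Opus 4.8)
The plan is to treat the two inequalities separately. For the first one I will prove the stronger statement that $\renorm{\ceff_n}$ lies below $M_n$ in the \emph{convex order}, that is, $\Et\bracks{\psi\renorm{\ceff_n}} \le \Et\bracks{\psi(M_n)}$ for \emph{every} convex $\psi\colon\R_+\to\R$ (both sides read in $\intof{0}{\infty}$); for this half the hypotheses that $\phi$ be $C^1$ and regularly varying play no role, they only keep the right-hand side finite in the applications. Note that $\renorm{\ceff_n}$ and $M_n$ both have expectation $1$. The proof is by induction on $n$. For $n=1$ there is even equality: taking $n=1$ in \eqref{c_beta} and using $\ceffprt_0\equiv1$ (as $\tau^{(0)}=0$ deterministically) gives $\ceff_1=\sum_{i=1}^{\nu}A(i)=M_1$, while $u_1=\Et\bracks{\sum_i A(i)}=1$ by \eqref{eq:hyp_normalized}, so $\renorm{\ceff_1}=M_1$.

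For the inductive step, write $g(x)=x/(1+x)$, which is strictly increasing, strictly concave, and vanishes at $0$. By \eqref{eq:ceff2} and the branching property, $\ceff_n=\sum_{i=1}^{\nu}A(i)\,g\pars{\ceff_{n-1}^{(i)}}$, where the $\ceff_{n-1}^{(i)}$ are i.i.d.\ copies of $\ceff_{n-1}$, independent of $\mathbf A$, and $u_n=\Et\bracks{g(\ceff_{n-1})}$ by \eqref{eq:ceff1}. Hence $\renorm{\ceff_n}=\sum_{i=1}^{\nu}A(i)\,W^{(i)}$, with $W=g(\ceff_{n-1})/\Et\bracks{g(\ceff_{n-1})}$ a mean-one variable and $W^{(i)}$ i.i.d.\ copies, to be compared with $M_n=\sum_{i=1}^{\nu}A(i)\,M_{n-1}^{(i)}$, where $\Et\bracks{M_{n-1}}=1$. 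Two facts then close the induction. (i) Passing through the concave map $g$ can only shrink the convex order: for any nonnegative $X$ with $0<\Et\bracks{g(X)}<\infty$ one has $g(X)/\Et\bracks{g(X)}\le_{\mathrm{cx}}X/\Et\bracks{X}$; applied with $X=\ceff_{n-1}$ this gives $W\le_{\mathrm{cx}}\renorm{\ceff_{n-1}}$, and the induction hypothesis together with transitivity of $\le_{\mathrm{cx}}$ gives $W\le_{\mathrm{cx}}M_{n-1}$. (ii) The operation $Z\mapsto\sum_{i=1}^{\nu}A(i)Z^{(i)}$ (with $Z^{(i)}$ i.i.d.\ copies of $Z$, independent of $\mathbf A$) preserves $\le_{\mathrm{cx}}$: conditionally on $\mathbf A$ it is a finite nonnegative linear combination of independent summands, and $\le_{\mathrm{cx}}$ is stable under nonnegative scaling, under independent convolution, and under mixtures; so conditioning on $\mathbf A$ and averaging turns $W\le_{\mathrm{cx}}M_{n-1}$ into $\renorm{\ceff_n}\le_{\mathrm{cx}}M_n$, completing the induction.

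It remains to prove (i). Normalising $\Et\bracks{X}=1$ and setting $c=\Et\bracks{g(X)}$, both $g(X)/c$ and $X$ have mean $1$, so by the classical cut criterion — a single crossing of the distribution functions together with equal means implies the convex order — it suffices that $F_{g(X)/c}(t)-F_X(t)$ be $\le0$ on an initial half-line and $\ge0$ afterwards. Since $g$ is strictly increasing, $F_{g(X)/c}(t)=F_X\pars{g^{-1}(ct)}$, and as $F_X$ is non-decreasing the sign of $F_{g(X)/c}(t)-F_X(t)$ equals that of $g^{-1}(ct)-t$, equivalently of $g(t)-ct$, equivalently (for $t>0$) of $g(t)/t-c$. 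Because $g$ is concave and $g(0)=0$, the ratio $g(t)/t$ is non-increasing, so $\setof{t>0}{g(t)/t\ge c}$ is an initial interval — exactly the single crossing required.

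For the second inequality, $(\phi(M_n))_{n\ge1}$ is a submartingale, by convexity of $\phi$ and the martingale property of $(M_n)$ (conditional Jensen), so $n\mapsto\Et\bracks{\phi(M_n)}$ is non-decreasing; since $M_n\to M_\infty$ almost surely it then suffices to show $\Et\bracks{\phi(M_n)}\to\Et\bracks{\phi(M_\infty)}$, for then each term is below the limit. Under \eqref{eq:hypkappa}, for the relevant exponent $p$ — any $p\in\intoo{1}{\kappa}$ if $\kappa\le2$, and $p=2$ if $\kappa>2$ — one has $\psi(p)<0$ (because $\psi(1)=0$, $\psi'(1)<0$ and $\psi$ is convex) and $\Et\bracks{\pars{\sum_i A(i)}^p}<\infty$, so the cascade martingale converges to $M_\infty$ in $L^p$ (see \cite{lyons_biggins,liu_generalized}; for $p=2$ this is elementary, from $\Et\bracks{M_n^2}=e^{\psi(2)}\Et\bracks{M_{n-1}^2}+\Et\bracks{\sum_{i\neq j}A(i)A(j)}$ with $e^{\psi(2)}<1$), whence $\Et\bracks{M_n^p}\to\Et\bracks{M_\infty^p}<\infty$ by Fact~\ref{fact:liutail}. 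Since $0\le\phi(M_n)\le CM_n^p$ and $\phi(M_n)\to\phi(M_\infty)$ almost surely, the generalised dominated convergence theorem gives $\phi(M_n)\to\phi(M_\infty)$ in $L^1$, which is what we needed. The one genuinely non-routine point is step (i): recognising that the concave smoothing is a convex-order contraction, realised through the single crossing of distribution functions; the secondary delicacy is the $L^p$-convergence input when $\kappa\le2$, which is exactly where the moment conditions of \eqref{eq:hypkappa} are used.
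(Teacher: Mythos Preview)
Your argument is correct and, for the first inequality, takes a genuinely different route from the paper. Both proofs share the same inductive skeleton on $n$ and both reduce to the one-variable statement that $g(\xi)/\Et[g(\xi)]$ sits below $\xi/\Et[\xi]$ for a suitable class of test functions, with $g(x)=x/(1+x)$. The paper proves this single-variable step (stated as Fact~\ref{fact:ineqconvexrenorm}) by interpolation: it sets $f(x)=\Et\bracks{\phi\renorm{\xi/(1+x\xi)}}$, differentiates in $x$, and uses a symmetrisation trick with an independent copy to get $f'\le0$; this is where the $C^1$ and regular-variation hypotheses on $\phi$ enter, purely as technical conditions to justify differentiating under the expectation. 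Your route via the Karlin--Novikoff cut criterion is cleaner and strictly stronger: the monotonicity of $t\mapsto g(t)/t$ gives a single crossing of the distribution functions, which together with equality of means yields the full convex order $g(\xi)/\Et[g(\xi)]\le_{\mathrm{cx}}\renorm{\xi}$, with no smoothness or growth assumption on the test function. Likewise, where the paper lifts the one-variable inequality to the sum $\sum_i A(i)(\cdot)$ by an inner induction on the number of summands (conditioning on one term at a time and reapplying Fact~\ref{fact:ineqconvexrenorm} to shifted test functions), you invoke the standard closure of $\le_{\mathrm{cx}}$ under nonnegative scaling, independent convolution and mixtures, which is more transparent. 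For the second inequality the two proofs are essentially the same --- both rest on $(M_n)$ being bounded in $L^p$ --- except that the paper proves this boundedness self-containedly via Neveu's inequality~\eqref{eq:ineqneveu}, while you cite the literature for $L^p$-convergence and then close with generalised dominated convergence; the paper's write-up is terser here and your version makes the final passage to $\Et[\phi(M_\infty)]$ more explicit.
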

\begin{proof}[Proof of Lemma~\ref{lem:ineq_phi}]
  We prove by induction that, for any $n\geq1$, for any $\phi$ as in the
  first statement of the lemma,
  \[
    \Et[\phi\renorm{\ceff_n}] \leq \Et[\phi(M_n) ].
  \]
  Notice that $\ceff_1 (T) = M_1(T)$ and
  for $n\geq1$, by \eqref{eq:ceff1} and \eqref{eq:ceff2}, observe that
  \[
    \renorm{\ceff_{n+1}(T)} = 
    \sum_{i=1}^{\numch[]{\root}} \wt[](i)
    \renorm[\Big]{\frac{\ceff_n(T[i])}{1 + \ceff_n(T[i])}}
  \]
  By independence, it suffices to show that, for any $k\geq0$,
  for any $(a_1, a_2, ..., a_k) \in \intoo{0}{\infty}^k$, for any (convex,
  differentiable, \dots) function $\phi$,
  \[
    \Et \bracks[\Big]{\phi\pars[\Big]{\sum_{i=1}^{k} a_i
    \renorm[\Big]{\frac{\ceff_n(T[i])}{1 + \ceff_n(T[i])}}}}
    \leq
    \Et \bracks[\Big]{\phi\pars[\Big]{\sum_{i=1}^{k} a_i M_n(T[i])}}.
  \]
  Assume the result is true for $k\geq0$.
  Then,
  \begin{align*}
    &\Et \bracks[\Big]{\phi\pars[\Big]{\sum_{i=1}^{k} a_i
    \renorm[\Big]{\frac{\ceff_n(T[i])}{1 + \ceff_n(T[i])}}
    +
    a_{k+1}
    \renorm[\Big]{\frac{\ceff_n(T[k+1])}{1 + \ceff_n(T[k+1])}}
    }}
    \\
    &=
    \Et \bracks[\Big]{\phi\pars[\Big]{
    B+    a_{k+1}
    \renorm[\Big]{\frac{\ceff_n(T[k+1])}{1 + \ceff_n(T[k+1])}}
  }},
  \end{align*}
  where $B$ is the sum in the first expectation and is independent of the last
  term.
  Reasoning conditionally with respect to $B$,
  we may use the previous fact with the
  function
  $x \mapsto \phi(a_{k+1}x + B)$ to obtain
  that this expectation is bounded from above by
  \[
  \Et \bracks[\big]{\phi\pars[]{
    B+    a_{k+1}
    \renorm*{\ceff_n(T[k+1])}}
    }
    \leq
    \Et \bracks[\big]{\phi\pars[]{
      B+    a_{k+1}M_n(T[k+1])
  }},
  \]
  where for the last inequality, we used the induction hypothesis on $n$.
  Now reason conditionnally on $M_n(T[k+1])$ to use the induction hypothesis on
  $k$.

  For the last assertion, it suffices to see that
  $(M_n)$ is bounded in $L^p$ if $1< p <\kappa$ for $\kappa \leq 2$ and bounded
  in $L^2$ for $\kappa > 2$, which is certainly well-known.
  For a quick proof,
  let $n\geq 1$ and $p$ as above ($p=2$ if $\kappa > 2$). 
  Reason conditionally with respect to $\mathcal{F}_1$ and use
  an inequality due to Neveu (stated later in this paper as \eqref{eq:ineqneveu}):
  \[
    \Et\bracksof{M_{n+1}^p}{\mathcal{F}_1}
    \leq \sum_{i = 1}^{\numch[]{\root}} \wt[](i)^p \Et \bracks[]{M_n^p}
  + \pars[\Big]{\sum_{i = 1}^{\numch[]{\root}} \wt[](i)\Et\bracks[]{M_n}}^p.
  \]
  As a consequence,
  \[
    \Et [M_{n+1}^p] \leq e^{\psi(p)}\Et \bracks[]{M_n^p} +
    \Et \bracks[\Big]{\pars[\Big]{\sum_{i=1}^\nu A(i)}^p}
  \]
  Since $\psi(p)<0$ and
  $\Et \bracks[\big]{\pars[\big]{\sum_{i=1}^\nu A(i)}^p} < \infty$ by
  assumption,
  this is easily seen to imply that $\sup_{n\geq1} \Et [M_{n}^p] < \infty$.
\end{proof}

Now we need to study the asymptotics of $a \mapsto \Et
\bracks{\phi_{a} (M_\infty)}$ as $a$ goes to infinity.
For later use, consider in general, for $p>0$ and $a>0$,
\begin{equation}
\varphi_p(a) = \Et \bracks[\Big]{\pars[\Big]{\frac{M_\infty^2}{a +
M_\infty}}^p} = \Et \bracks[\big]{\phi_a(M_\infty)^p}.
\end{equation}

Using the tail probability estimate in Fact~\ref{fact:liutail}, for $1 <
\kappa \leq 2$, or the integrability of $M_\infty^2$ for $\kappa > 2$, one
obtains:
\begin{lemma}
  \label{lem:order_varphi}
  As $a$ goes to infinity,
  \begin{equation}
    \label{eq:order_varphi}
    \varphi_p(a)
    \begin{cases}
      \oforder a^{p-\kappa} & \text{if $\kappa / 2 < p < \kappa \leq 2$;}\\
      \oforder a^{p-\kappa}\log a & \text{if $p = \kappa / 2$;}\\
      \leq C a^{p-2} & \text{for some constant $C>0$, if $\kappa > 2$ and $1 < p < 2$;}\\
      \sim \Et [M_\infty^2] / a & \text{if $\kappa > 2$ and $p = 1$.}
    \end{cases}
  \end{equation}
\end{lemma}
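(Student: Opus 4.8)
The plan is to write $\varphi_p(a) = \Et\bracks{g_a(M_\infty)}$ with $g_a(x) := \phi_a(x)^p = x^{2p}(a+x)^{-p}$, and to use the two–regime behaviour of $g_a$: for $0 \leq x \leq a$ one has $2^{-p}x^{2p}/a^p \leq g_a(x) \leq x^{2p}/a^p$, whereas for $x > a$ one has $2^{-p}x^p \leq g_a(x) \leq x^p$. Splitting the expectation along $\{M_\infty \leq a\}$ and $\{M_\infty > a\}$ therefore gives, up to universal constants,
\[
  \varphi_p(a) \oforder \frac{1}{a^p}\,\Et\bracks[\big]{M_\infty^{2p}\indic{M_\infty \leq a}} \;+\; \Et\bracks[\big]{M_\infty^{p}\indic{M_\infty > a}} .
\]

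For $\kappa \leq 2$ I would estimate both pieces using the tail bound $\Pt(M_\infty > s) \oforder s^{-\kappa}$ of Fact~\ref{fact:liutail} together with the layer–cake identities $\Et[M_\infty^{2p}\indic{M_\infty \leq a}] = \int_0^a 2p\,s^{2p-1}\Pt(M_\infty > s)\dd s - a^{2p}\Pt(M_\infty > a)$ and $\Et[M_\infty^{p}\indic{M_\infty > a}] = a^p\Pt(M_\infty > a) + \int_a^\infty p\,s^{p-1}\Pt(M_\infty > s)\dd s$. For $a$ large the first integral is of order $a^{2p-\kappa}$ when $p > \kappa/2$ and of order $\log a$ when $p = \kappa/2$, while the second converges (since $p < \kappa$) to something of order $a^{p-\kappa}$. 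Dividing the first contribution by $a^p$ and adding yields $\varphi_p(a) \oforder a^{p-\kappa}$ in the first case and $\varphi_p(a) \oforder a^{p-\kappa}\log a$ in the second. The matching lower bounds come from restricting the defining expectation: $\varphi_p(a) \geq 2^{-p}\Et[M_\infty^{p}\indic{M_\infty > a}] \geq 2^{-p}a^p\Pt(M_\infty > a) \gtrsim a^{p-\kappa}$ settles the first case, and $\varphi_p(a) \geq 2^{-p}a^{-p}\Et[M_\infty^{2p}\indic{M_\infty \leq a}] \gtrsim a^{-p}\log a$ settles the second, the lower estimate on $\Et[M_\infty^{2p}\indic{M_\infty \leq a}]$ coming from integrating $s^{2p-1-\kappa}=s^{-1}$ over $[s_0,a]$, where $s_0$ is a threshold past which the lower tail bound holds.

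For $\kappa > 2$ only $M_\infty \in L^2$ is available (Fact~\ref{fact:liutail}), so I would dominate $g_a$ pointwise rather than go through its tail. For $1 < p < 2$ the key is the elementary inequality $g_a(x) \leq a^{p-2}x^2$ for all $x \geq 0$, $a > 0$: it is homogeneous of degree $2p-2$, so after the substitution $x = at$ it reduces to $t^{2p-2} \leq (1+t)^p$, which holds because $t^{2p-2} \leq 1 \leq (1+t)^p$ for $t \leq 1$ and $t^{2p-2} \leq t^p \leq (1+t)^p$ for $t \geq 1$ (using $1 < p \leq 2$); taking expectations gives $\varphi_p(a) \leq a^{p-2}\Et[M_\infty^2]$. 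For the remaining case I would use monotone convergence: since $a^p g_a(x) = x^{2p}/(1+x/a)^p$ increases to $x^{2p}$ as $a \to \infty$, one gets $a^p\varphi_p(a) \uparrow \Et[M_\infty^{2p}]$, which is finite — hence gives $\varphi_p(a) \sim \Et[M_\infty^{2p}]/a^p$ — as soon as $M_\infty \in L^{2p}$; in particular $a\varphi_1(a) = \Et[aM_\infty^2/(a+M_\infty)] \uparrow \Et[M_\infty^2] < \infty$ when $\kappa > 2$, i.e. $\varphi_1(a) \sim \Et[M_\infty^2]/a$.

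The only step I expect to require genuine care is the bookkeeping in the $\kappa \leq 2$ case: deciding which of the two contributions in the reduction above dominates in each regime, and checking that the subtracted boundary term $a^{2p}\Pt(M_\infty > a) \oforder a^{2p-\kappa}$ never destroys the main term — this is immediate, since that term is $O(1)$ when $2p = \kappa$ and, when $2p > \kappa$, is of the same order $a^{2p-\kappa}$ as the contribution of $\int_{a/2}^{a} 2p\,s^{2p-1}\Pt(M_\infty > s)\dd s$ but with a strictly smaller constant. Everything else is a routine computation.
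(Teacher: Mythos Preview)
Your argument is correct and takes a genuinely different route from the paper. The paper does not split at $\{M_\infty \le a\}$; instead it writes $\varphi_p(a)=\int_0^\infty g_a'(x)\,\Pt(M_\infty>x)\,\dd x$ via Tonelli, performs the change of variable $y=x/a$ to pull out the factor $a^{p-\kappa}$, and then analyses the single integral $\int_0^\infty y^{2p-\kappa-1}(y+2)(1+y)^{-p-1}(ay)^\kappa\Pt(M_\infty>ay)\,\dd y$, the integrability of the limiting integrand at $0$ being exactly the dichotomy $p>\kappa/2$ versus $p=\kappa/2$. Your two-regime splitting plus layer-cake is more elementary and arguably more transparent about where each exponent comes from; the paper's single-integral version has the advantage that the upper and lower bounds come out of the same formula with no separate bookkeeping. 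Two small remarks: your last-paragraph worry about the subtracted boundary term is unnecessary, since for the upper bound the subtraction only helps, for the lower bound in the case $p>\kappa/2$ you use the other piece anyway, and in the critical case $p=\kappa/2$ the boundary term is $O(1)$ against a $\log a$ main term --- the ``strictly smaller constant'' claim is not needed (and not obviously true as stated). Also, you correctly noticed that the last case of the statement is a typo: the paper's own proof (and all later uses of the lemma) treat $p=1$, not $p=2$, exactly as you do.
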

\begin{proof}
  Write $\Pt_{M_\infty}$ for the distribution of $M_\infty$.
  Differentiate
  the function $x \mapsto \left(\frac{x^2}{a+x}\right)^p$, to obtain
  \[
    \varphi_p(a) = \int_{0}^\infty
    \pars[\Big]{ \int_0^{s} p \frac{x^2 + 2ax}{(a + x)^2}
    \pars[\Big]{\frac{x^2}{a + x}}^{p-1} \dd x}
    \Pt_{M_\infty}(\dd s).
  \]
  Using Tonelli's theorem together with the change of variable $y=x/a$
  yields
  \begin{equation}
    \label{eq:phip_integral}
    \varphi_p(a)
    =
    pa^{p-\kappa}\int_0^\infty
    \frac{y^{2p - \kappa - 1}(y+2)}{(1+y)^{p+1}}
    (ay)^\kappa \Pt (M_\infty > a y)
    \dd y.
  \end{equation}
  Let $f(y)$ be the integrand in the last equation.

  Now assume that $1 < \kappa \leq 2$ and  write $\underline{\ell}$
  (respectively $\overline{\ell}$) for the inferior (respectively superior) limit of
  $s^\kappa \Pt( M_\infty > s)$, as $s$ goes to infinity.
  Consider $\varepsilon > 0$ so
  small that $\underline{\ell} - \varepsilon > 0$.
  Let $N > 0$ be so large that
  \[
    \forall s \geq N, \qquad
    s^\kappa \Pt( M_\infty > s)
    \in \intoo{\underline{\ell} - \varepsilon}{\overline{\ell} + \varepsilon}.
  \]
  Assume that $a > N$.
  On the interval $\intoo{0}{N/a}$, dominating $\Pt(M_\infty > ay)$ by $1$
  yields
  \[
    f(y) \leq a^\kappa y^{2p-1} \max_{0\leq y \leq N/a} \frac{2+y}{(1+y)^{p+1}},
  \]
  so that in any case,
  \[
    pa^{p-\kappa}\int_0^{N/a} f(y) \dd y
    \leq
    \bracks[\Big]{p N^{2p} \max_{0\leq y \leq 1} \frac{2+y}{(1+y)^{p+1}}}
    a^{-p},
  \]
  which will be negligible.
  On the other hand, if $y$ is in the interval $\intfo{N/a}{\infty}$, then
  \begin{equation}
    f(y) \leq (\overline{\ell} + \varepsilon)
    \frac{y^{2p - \kappa - 1}(y+2)}{(1+y)^{p+1}}
  \end{equation}
  and similarly for the lower bound.
  Those bounds are integrable on $\intoo{0}{\infty}$
  if $p > \kappa / 2$ and in this case, we may
  conclude by applying the monotone convergence theorem.

  Now assume that $p = \kappa/2$.
  The bound above is still integrable at the neighborhood of $\infty$, but
  not at the neighborhood of $0$.
  As a consequence, the main contribution in the integral comes from the term
  \[
    \int_{N/a}^1 f(y) \dd y
    \leq (\overline{\ell} + \varepsilon) \int_{N/a}^1 y^{-1} \frac{2+y}{(1+y)^{p+1}} \dd y
    \oforder_{a \to \infty} \log(a),
  \]
  and the same is true for the lower bound.

  Finally, assume that $\kappa > 2$ and recall that in this case,
  by our hypotheses,
  $\Et\bracks{M_\infty^2}$ is finite, thus by Markov's inequality, for all $r >
  0$,
  $\Pt(M_\infty > r) \leq \Et\bracks{M_\infty^2}/r^2$. Now, if $1 < p < 2$,
  the rest of the computations is exactly the same as in the first point,
  whereas if $p = 1$, by dominated convergence,
  \[
    a\varphi_1(a) = \Et\bracks*{\frac{M_\infty^2}{1 + M_\infty/a}}
    \xrightarrow[a \to \infty]{} \Et\bracks*{M_\infty^2}.\qedhere
  \]
  
\end{proof}
Going back to \eqref{eq:recun} and using the two previous lemmas, we see that,
for some constant $C$ in $\intoo{0}{\infty}$ and any $n\geq2$,
\begin{equation}
  \label{eq:pre_lower_bound}
  u_{n-1} - u_n \leq
  \begin{cases}
    C u_{n-1} a_{n-1}^{1-\kappa} = C u_{n-1}^{\kappa}
    & \text{if $1 < \kappa < 2$;} \\
    C u_{n-1} a_{n-1}^{-1} \log a_{n-1} = C u_{n-1}^2 \log (1 / u_{n-1}) 
    & \text{if $\kappa = 2$;} \\
    \Et[M_\infty^2] u_{n-1}^{2} & \text{if $\kappa > 2$.}
  \end{cases}
\end{equation}

To obtain our lower bound, it suffices to use one part of the
following elementary analysis
lemma:
\begin{lemma}
  \label{lem:un}
  Let $(u_n)$ be a non-increasing sequence of positive real numbers going to $0$
  as $n$ goes to infinity. Let $\alpha > 1$ and $C \in \intoo{0}{\infty}$.
  \begin{enumerate}
    \item If for $n$ large enough, $u_n - u_{n+1} \leq C u_n^{\alpha}$, then
      $
        \liminf_{} n^{\frac{1}{\alpha - 1}} u_n \geq [C(\alpha - 1)]^{-\frac{1}{\alpha - 1}}.
      $
    \item If for $n$ large enough, $u_n - u_{n+1} \geq C u_n^{\alpha}$, then
      $
        \limsup_{} n^{\frac{1}{\alpha - 1}} u_n \leq [C(\alpha - 1)]^{-\frac{1}{\alpha - 1}}.
      $
    \item If for $n$ large enough, $u_n - u_{n+1} \leq Cu_n^2 \log(1/u_n)$, then
      $\liminf u_n n \log n \geq C^{-1}$.
    \item If for $n$ large enough, $u_n - u_{n+1} \geq Cu_n^2 \log(1/u_n)$, then
      $\limsup u_n n \log n \leq C^{-1}$.
  \end{enumerate}
\end{lemma}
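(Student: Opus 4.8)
\emph{Proof idea.} The four statements fall into two dual pairs, and in each the plan is to pass to a transformed sequence whose increments are essentially constant (in the power-law case) or of size $\log$ (in the $n\log n$ case), and then to read off the asymptotics of the sequence itself by the Stolz--Cesàro theorem in its $\liminf/\limsup$ form (for $b_n\uparrow\infty$, $\liminf\frac{a_{n+1}-a_n}{b_{n+1}-b_n}\le\liminf\frac{a_n}{b_n}\le\limsup\frac{a_n}{b_n}\le\limsup\frac{a_{n+1}-a_n}{b_{n+1}-b_n}$).

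For (1) and (2) I would set $w_n = u_n^{1-\alpha}$, a positive sequence tending to $+\infty$. Writing
\[
  w_{n+1} - w_n = \int_{u_{n+1}}^{u_n} (\alpha-1)\,t^{-\alpha}\,\dd t
\]
and using monotonicity of $t\mapsto t^{-\alpha}$, one gets
\[
  (\alpha-1)\,u_n^{-\alpha}(u_n - u_{n+1})
  \;\le\; w_{n+1} - w_n \;\le\;
  (\alpha-1)\,u_{n+1}^{-\alpha}(u_n - u_{n+1}).
\]
In case (2) the left inequality together with $u_n - u_{n+1}\ge Cu_n^\alpha$ gives $w_{n+1}-w_n \ge C(\alpha-1)$ for $n$ large, hence $\liminf w_n/n \ge C(\alpha-1)$; since $n^{1/(\alpha-1)}u_n = (w_n/n)^{-1/(\alpha-1)}$, this is the claimed $\limsup$. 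In case (1) one first notes that $u_n - u_{n+1}\le Cu_n^\alpha$ forces $u_{n+1}/u_n\to 1$ (because $u_n\to 0$ and $\alpha>1$), so the right inequality gives $w_{n+1}-w_n \le C(\alpha-1)(u_n/u_{n+1})^\alpha \to C(\alpha-1)$, whence $\limsup w_n/n \le C(\alpha-1)$, which is the claimed $\liminf$.

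For (3) and (4) I would instead set $w_n = 1/u_n$, so that $w_{n+1}-w_n = (u_n-u_{n+1})/(u_nu_{n+1})$. Under the hypothesis of (4), since $u_{n+1}\le u_n$, this is $\ge C\log w_n$; under the hypothesis of (3), using also $u_{n+1}/u_n\to 1$, it is $\le C(1+\varepsilon)\log w_n$ for every $\varepsilon>0$ and $n$ large. The goal is then to compare $w_n$ with $n\log n$ by Stolz--Cesàro with $b_n = n\log n$ (so $b_{n+1}-b_n\sim \log n$), for which it is enough to know $\log w_n\sim \log n$. This needs a short bootstrap: in case (4) the crude bound $w_{n+1}-w_n \ge C\log w_n \ge C$ (once $w_n\ge e$) already gives $w_n\ge cn$, hence $\log w_n \ge \log n + O(1)$; in case (3) one iterates the successive crude consequences $w_{n+1}\le (1+C(1+\varepsilon))w_n \Rightarrow \log w_n = O(n) \Rightarrow w_{n+1}-w_n = O(n) \Rightarrow w_n = O(n^2) \Rightarrow w_{n+1}-w_n = O(\log n) \Rightarrow w_n = O(n\log n)$, so $\log w_n \le \log n + O(\log\log n)$. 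Feeding $\log w_n = (1+o(1))\log n$ back into the increment bounds, Stolz--Cesàro gives $\liminf w_n/(n\log n)\ge C$ in case (4) and $\limsup w_n/(n\log n)\le C(1+\varepsilon)$ for all $\varepsilon>0$, hence $\le C$, in case (3); rewriting $u_n\,n\log n = (w_n/(n\log n))^{-1}$ yields the two stated inequalities. One could equally well avoid Stolz and telescope directly, using $\sum_{k\le n}\log k = n\log n - n + O(\log n)$.

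The only genuinely delicate point is this $n\log n$ regime: one must ensure the $\log w_n$ produced by the increment estimate is truly asymptotic to $\log n$ and not merely $O(n)$, since otherwise the constant in $u_n\,n\log n\to C^{-1}$ would not come out sharp — that is exactly what the bootstrap secures. The power-law cases (1)--(2) are routine once the substitution $w_n = u_n^{1-\alpha}$ is in place.
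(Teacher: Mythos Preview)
Your argument is correct. For parts (1)--(2) it is essentially the paper's proof: the paper applies the mean value theorem to $f(x)=x^{1-\alpha}/(\alpha-1)$ and then Ces\`aro-averages the increment inequality, which is exactly your integral bound on $w_{n+1}-w_n$ followed by Stolz--Ces\`aro.

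For parts (3)--(4) the paper takes a slightly different route: instead of $w_n=1/u_n$ it uses the more tailored transform $g(x)=\dfrac{1/x}{\log(1/x)}$, whose derivative $g'(x)=-\dfrac{1}{x^{2}\log(1/x)}\bigl(1-\tfrac{1}{\log(1/x)}\bigr)$ already contains the factor $x^{2}\log(1/x)$, so the mean value theorem produces increments $g(u_{n+1})-g(u_n)$ that are directly comparable to $C$ and one averages as in the power-law case. This avoids your bootstrap entirely. Your choice $w_n=1/u_n$ gives increments of size $C\log w_n$, which forces the extra step of showing $\log w_n\sim\log n$; your iterated-substitution bootstrap does this correctly, but at the cost of a few more lines. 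The trade-off is that the paper's function $g$ is a clever guess that makes the proof one line, while your approach uses only the obvious substitution and pays with an elementary (and instructive) bootstrap.
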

\begin{proof}
  To prove the first two assertions of the lemma, consider, for $x > 0$,
  $f(x) = x^{1 - \alpha} / (\alpha - 1)$.
  By the mean value theorem,
  \[
    f(u_{n+1}) - f(u_{n}) = f'(\xi_n) (u_{n+1} - u_{n}) =
    \xi_n^{-\alpha}(u_{n}-u_{n+1}),
  \]
  for some $\xi_n \in \intoo{u_{n+1}}{u_{n}}$.
  In case~1, it is easy to see that $u_n \sim u_{n+1}$, therefore
  \[
    f(u_{n+1}) - f(u_n) \leq C u_n^{\alpha} \xi_n^{-\alpha} \longrightarrow C,
  \]
  and we may conclude by averaging this inequality.

  In case~2,
  since $\xi_n \leq u_n$,
  \[
    f(u_{n+1}) - f(u_n) \geq C u_n^{\alpha} \xi_n^{-\alpha} \geq C.
  \]

  The method for the last two assertions is similar: we apply the mean value
  theorem to the
  function $g$ defined by
  \[
    g(x) = \frac{1/x}{\log (1/x)},
  \]
  whose derivative is
  \[
    g'(x) = - \frac{1}{x^2 \log (1/x)} \pars[\Big]{1 -
    \frac{1}{\log(1/x)}}.
  \]
  This gives, for instance in case~3,
  \[
    \liminf u_n n \log (1/u_n) \geq C^{-1},
  \]
  so that, for any large enough $n$,
  \[
    \log u_n + \log n + \log \log (1/u_n) \geq -\log(2C),
  \]
  thus
  \[
    \frac{\log n}{\log (1/u_n)} \geq 1 - \frac{\log (2C) +
    \log\log(1/u_n)}{\log(1/u_n)} \longrightarrow 1
  \]
  from which we conclude that $\liminf \log n / (\log (1/u_n)) \geq 1$, which
  ends the proof in case~3.
  The proof in case~4 is the same.
\end{proof}

Using the first and third points of this lemma with \eqref{eq:pre_lower_bound}
finally yields the following lower
bounds:
\begin{proposition}
  \label{prop:lower}
  Under the hypotheses \eqref{eq:hyp_normalized}, \eqref{eq:hyppsip1} and
  \eqref{eq:hypkappa},
  \begin{enumerate}
    \item $\liminf_{n\to\infty} n^{1/(\kappa-1)}\Et \bracks*{\ceff_n} > 0$, if
      $1 < \kappa < 2$;
    \item $\liminf_{n\to\infty} n\log n\Et \bracks*{\ceff_n} > 0$, if
      $\kappa = 2$ and
    \item $\liminf_{n\to\infty} n\Et\bracks*{\ceff_n} \geq
      1/\Et\bracks{M_\infty^2}$, if $\kappa > 2$.
  \end{enumerate}
\end{proposition}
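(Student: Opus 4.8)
The plan is to feed the recursion \eqref{eq:pre_lower_bound}, already established above, into the elementary analysis Lemma~\ref{lem:un}; there is essentially nothing left to do. First I would check that $(u_n)$ meets the hypotheses of that lemma: it is non-increasing because $(\ceff_n)$ is, it is strictly positive because $\ceff_n>0$ on the positive-probability event of non-extinction, and it tends to $0$, by dominated convergence, since $0\le\ceff_n\le\ceff_1=M_1$ with $\Et[M_1]=e^{\psi(1)}=1$ by \eqref{eq:hyp_normalized}, while $\ceff_n\to0$ almost surely by null-recurrence of the walk.

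For $1<\kappa<2$, the first line of \eqref{eq:pre_lower_bound} says $u_{n-1}-u_n\le C\,u_{n-1}^{\kappa}$ for $n$ large, so point~(1) of Lemma~\ref{lem:un} with $\alpha=\kappa$ gives $\liminf_n n^{1/(\kappa-1)}u_n\ge\bracks{C(\kappa-1)}^{-1/(\kappa-1)}>0$. For $\kappa=2$, the second line says $u_{n-1}-u_n\le C\,u_{n-1}^{2}\log(1/u_{n-1})$, and point~(3) of Lemma~\ref{lem:un} gives $\liminf_n n\log n\,u_n\ge C^{-1}>0$. Rewriting $u_n=\Et[\ceff_n]$ yields the first two assertions.

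For $\kappa>2$ the target constant is the sharp value $1/\Et[M_\infty^2]$, so instead of reading off the third line of \eqref{eq:pre_lower_bound} as a mere inequality I would re-derive it keeping the error explicit. From \eqref{eq:recun}, Lemma~\ref{lem:ineq_phi} applied to $\phi_{a_{n-1}}$ --- which is convex, continuously differentiable, regularly varying of index $1$, and satisfies $\phi_a(x)=x^2/(a+x)\le\min(x,x^2/a)\le a^{1-p}x^p$ for every $p\in[1,2]$ --- bounds $\Et\bracks{\phi_{a_{n-1}}\renorm{\ceff_{n-1}}}$ by $\Et\bracks{\phi_{a_{n-1}}(M_\infty)}=\varphi_1(a_{n-1})$; then the equivalence $\varphi_1(a)\sim\Et[M_\infty^2]/a$ as $a\to\infty$ (the case $p=1$ in Lemma~\ref{lem:order_varphi}) together with $a_{n-1}=1/u_{n-1}$ gives, for every $\varepsilon>0$ and $n$ large enough, $u_{n-1}-u_n\le(1+\varepsilon)\Et[M_\infty^2]\,u_{n-1}^{2}$. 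Point~(1) of Lemma~\ref{lem:un} with $\alpha=2$ and $C=(1+\varepsilon)\Et[M_\infty^2]$ then yields $\liminf_n n\,u_n\ge\bracks{(1+\varepsilon)\Et[M_\infty^2]}^{-1}$, and letting $\varepsilon\downarrow0$ completes the proof.

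Once Lemmas~\ref{lem:ineq_phi}, \ref{lem:order_varphi} and \ref{lem:un} are granted, this is pure bookkeeping; the only step calling for a bit of care is the $\kappa>2$ case, where the multiplicative constant must be carried through the asymptotic equivalence of Lemma~\ref{lem:order_varphi} and through Lemma~\ref{lem:un}(1), with $\varepsilon$ sent to $0$ at the end, so as to land on the exact lower constant $1/\Et[M_\infty^2]$ rather than merely a positive one.
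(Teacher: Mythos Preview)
Your proof is correct and follows the paper's approach exactly: feed \eqref{eq:pre_lower_bound} into the appropriate cases of Lemma~\ref{lem:un}. For $\kappa>2$ you can spare yourself the $\varepsilon$-argument by observing directly that $\varphi_1(a)=\Et\bracks{M_\infty^2/(a+M_\infty)}\le\Et[M_\infty^2]/a$ for every $a>0$, so the third line of \eqref{eq:pre_lower_bound} already holds with the exact constant $\Et[M_\infty^2]$ and Lemma~\ref{lem:un}(1) with $\alpha=2$, $C=\Et[M_\infty^2]$ gives the sharp lower bound in one step.
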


\begin{remark}
  If we assume that we are in the non-lattice case
  these lower bounds can also be made explicit (in terms on
  the distribution of $M_\infty$) in the cases
  $1 < \kappa < 2$ and $\kappa = 2$.
  However, since our method does not provide explicit upper bounds,
  we have chosen not to make this additional assumption.
\end{remark}
\section{Upper bound and almost sure convergence}
We start with an easy \textit{a priori} upper bound.
\begin{lemma}\label{lem:aprioriupper}
  In any case, one has
\begin{equation}
  \label{eq:ineqapriori}
  \limsup_{n \to \infty} n u_n \leq 1.
\end{equation}
\end{lemma}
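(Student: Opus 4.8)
The plan is to read the bound straight off the recursion \eqref{eq:ceff1} by a single application of Jensen's inequality, with no recourse to the tail estimates of $M_\infty$; this argument in fact yields the clean bound $nu_n \le 1$ for \emph{every} $n\ge1$, which is more than the claimed $\limsup$ statement. The two ingredients are the exact value of $u_1$ and the concavity of $x\mapsto x/(1+x)$.

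First I would pin down the initial value. Since $\ceff_1 = M_1$ (as already observed in the proof of Lemma~\ref{lem:ineq_phi}), hypothesis \eqref{eq:hyp_normalized} gives
\[
  u_1 = \Et[\ceff_1] = \Et\bracks[\Big]{\sum_{i=1}^\nu A(i)} = 1,
\]
so $a_1 = 1$. Note also that $(\ceff_n)$ is non-increasing and dominated by the integrable random variable $\ceff_1 = M_1$, so each $u_n$ lies in $\intoo{0}{\infty}$. Next, the function $x \mapsto x/(1+x) = 1 - 1/(1+x)$ is concave on $\R_+$, so Jensen's inequality applied inside \eqref{eq:ceff1} gives, for every $n \ge 2$,
\[
  u_n = \Et\bracks[\Big]{\frac{\ceff_{n-1}}{1+\ceff_{n-1}}}
  \le \frac{\Et[\ceff_{n-1}]}{1+\Et[\ceff_{n-1}]}
  = \frac{u_{n-1}}{1+u_{n-1}}.
\]
Taking reciprocals turns this into the additive inequality $a_n \ge a_{n-1} + 1$. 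Iterating from $a_1 = 1$ yields $a_n \ge n$, that is $u_n \le 1/n$, hence $nu_n \le 1$ for all $n \ge 1$; a fortiori $\limsup_{n\to\infty} nu_n \le 1$, which is \eqref{eq:ineqapriori}.

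There is essentially no obstacle: the only two things to check are the normalization $u_1 = 1$ (which is exactly what \eqref{eq:hyp_normalized} provides) and the concavity of $x \mapsto x/(1+x)$ on $[0,\infty)$, both immediate. It is worth flagging that this \emph{a priori} bound is too crude on its own to produce the sharp constants of Theorem~\ref{thm}: in the regime $\kappa>2$ one must still match the lower bound $\liminf_n nu_n \ge 1/\Et[M_\infty^2]$ by going back to \eqref{eq:recun} and controlling $\Et\bracks{\phi_{a_{n-1}}\renorm{\ceff_{n-1}}}$ from above via Lemmas~\ref{lem:ineq_phi} and~\ref{lem:order_varphi}. What the present lemma does deliver, and what is needed to launch the later analysis (e.g. the hypotheses of Lemma~\ref{lem:un}), is the qualitative fact that $a_n \to \infty$, equivalently $u_n \to 0$, at rate at least $n$.
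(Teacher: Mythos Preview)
Your proof is correct and in fact cleaner than the paper's. The paper argues as follows: from \eqref{eq:ceff1} it writes $u_{n-1}-u_n=\Et\bracks{\ceff_{n-1}^2/(1+\ceff_{n-1})}$, uses the pointwise bound $x^2/(1+x)\ge (x/(1+x))^2$ and then Jensen for $x\mapsto x^2$ to get $u_{n-1}-u_n\ge u_n^2$; this yields $1/u_n-1/u_{n-1}\ge u_n/u_{n-1}$, which is then pushed to $1$ asymptotically via \eqref{eq:recun}, Lemma~\ref{lem:ineq_phi} and the fact that $\varphi_1(a_{n-1})\to0$, finishing with a Ces\`aro average. Your single application of Jensen to the \emph{concave} map $x\mapsto x/(1+x)$ gives directly $u_n\le u_{n-1}/(1+u_{n-1})$, i.e.\ $a_n-a_{n-1}\ge1$, which combined with $a_1=1$ yields the sharper non-asymptotic bound $nu_n\le1$ for every $n\ge1$. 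The gain is real: you avoid Lemma~\ref{lem:ineq_phi} (hence Fact~\ref{fact:ineqconvexrenorm}) and any prior knowledge that $a_n\to\infty$, and you obtain the statement needed downstream (in particular $a_n\ge n$) as an inequality rather than a $\limsup$. The paper's route, on the other hand, is a natural by-product of the machinery already set up around \eqref{eq:recun}, so its extra length costs nothing once that framework is in place.
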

\begin{proof}
  Let $n \geq 2$.
  We go back to \eqref{eq:ceff1} but this time, we write
  \[
  u_{n-1} - u_n = \Et \bracks[\Big]{\frac{\ceff_{n - 1}^2}{1 + \ceff_{n-1}}}
\geq \Et \bracks[\Big]{\pars[\Big]{\frac{\ceff_{n-1}}{1 + \ceff_{n -1}}}^2}
\geq \pars[\Big]{\Et \bracks[\Big]{\frac{\ceff_{n - 1}}{1 + \ceff_{n - 1}}}}^2
    = u_n^2.
  \]
  This implies that
  \[
    \frac{1}{u_n} - \frac{1}{u_{n-1}} \geq \frac{u_n}{u_{n-1}} = 1 - 
    \Et \bracks{\phi_{a_{n-1}}\renorm{\ceff_{n-1}}},
  \]
  by the identity~\eqref{eq:recun}.
  Using Lemma~\ref{lem:ineq_phi} yields
  \[
    \frac{1}{u_n} - \frac{1}{u_{n-1}} \geq
    1 - \varphi_1(a_{n-1}).
  \]
  Since this lower bound goes to $1$ as $n$ goes to infinity,
  averaging the previous inequality and using Ces\`aro's lemma yields
  \[
    \liminf_{n\to\infty} \frac{1}{nu_n} \geq 1,
  \]
  hence the result.
\end{proof}

To obtain more refined bounds, we first iterate \eqref{eq:ceff2}. Using
repeatedly the
identity,
\[
  \frac{x}{1+x} = x - \frac{x^2}{1+x} = x - \phi_1(x), \quad \forall x > 0,
\]
we obtain that, for any $n > k$,
\begin{align*}
  \ceff_n(T) &= \sum_{\abs{x}=1} \cd[](x) \ceff_{n-1}(T[x])
- \sum_{\abs{x}=1} \cd[](x)\phi_1(\ceff_{n-1} (T[x])),
    \\
  &= \sum_{\abs{x}=k} \cd[](x) \ceff_{n-k}(T[x])
    - \sum_{\abs{x}\leq k} \cd[](x)\phi_1( \ceff_{n-\abs{x}}(T[x]) ).
\end{align*}
Dividing by $u_n = \Et[\ceff_n]$, and using the equality
\begin{equation*}
    \phi_1( \ceff_{n-\abs{x}}(T[x]) )
  =
    \frac{u_{n-\abs{x}}^2 \renorm{\ceff_{n-\abs{x}}(T[x])}}{%
    u_{n-\abs{x}}(a_{n-\abs{x}} + \renorm{\ceff_{n-\abs{x}}(T[x])})}
  = \phi_{a_{n-\abs{x}}} \renorm{\ceff_{n-\abs{x}}(T[x])},
\end{equation*}
we finally obtain
\begin{equation}
  \label{eq:renormrec}
  \renorm{\ceff_n(T)}
  =
  \frac{u_{n-k}}{u_n}
  \sum_{\abs{x}=k} \cd[](x) \renorm{\ceff_{n-k}(T[x])}
  - \frac{1}{u_n}\sum_{\abs{x}\leq k}  \cd[](x)
  u_{n-\abs{x}}
  \phi_{a_{n-\abs{x}}} \renorm{\ceff_{n-\abs{x}}(T[x])} .
\end{equation}
On the other hand, by definition of $M_n$,
\begin{equation}
  \label{eq:martrec}
  M_n(T) = \sum_{\abs{x}=k} \cd[](x)M_{n-k} (T[x]),
\end{equation}
therefore, for any $n > k$,
\begin{equation}
  \label{eq:ceffXY}
  \renorm{\ceff_n(T)}
  = \frac{a_n}{a_{n-k}}M_\infty(T)
  + \frac{a_n}{a_{n-k}}X_{k,n}(T)
  - \sum^{k}_{j=1} \frac{a_n}{a_{n-j}} Y_{j,n}(T),
\end{equation}
where
\begin{equation*}
  X_{k,n} (T)=
  \sum_{\abs{x}=k} \cd[](x) \left( \renorm{\ceff_{n-k}(T[x])} - M_\infty(T[x])\right),
\end{equation*}
and for $j<n$,
\begin{equation*}
  Y_{j,n} (T)
  =
  \sum_{\abs{x} = j}
  \cd[](x)
  \phi_{a_{n-j}}\renorm{\ceff_{n-j}(T[x])}.
\end{equation*}
Our next step is, for $p\in \intoo{1}{\kappa\mi2}$,
to estimate the $L^p$ norms of $X_{k,n}$ and $Y_{j,n}$ in order
to prove the convergence in $L^p$ of $\renorm{\ceff_n}$ towards $M_\infty$.
Remark that, by the branching property,
conditionally on $\mathcal{F}_k$, $X_{k,n}$ is a sum of
independent, \emph{centered} variables while conditionally on $\mathcal{F}_j$,
$Y_{j,n}$ is a sum of independent, \emph{non-negative} random variables.
We may therefore use the following upper bounds:
\begin{fact}
  Let $p$ be a real number in $\intff{1}{2}$ and assume that $\xi_1, \dotsc,
  \xi_k$ are independent real-valued random variables such that for all $1 \leq
  i \leq k$, $\E[\abs{\xi_i}^p] < \infty$.
  \begin{enumerate}
    \item If $\xi_1, \dotsc, \xi_k$ are non-negative, then
      \begin{equation}
        \label{eq:ineqneveu}
        \E[ (\xi_1 + \dotsb + \xi_k)^p ] \leq
        \sum^{k}_{i=1} \E \bracks*{\xi_i^p} +
      \pars[\bigg]{ \sum_{i=1}^k \E \xi_i }^p.
      \end{equation}
    \item If $\xi_1, \dotsc, \xi_k$ are centered, then
      \begin{equation}
        \label{eq:ineq_von_bahr}
        \E \bracks*{\abs*{\xi_1 + \dotsb + \xi_k }^p}
        \leq 2 \sum^{k}_{i=1} \E \bracks*{\abs*{\xi_i}^p}.
      \end{equation}
  \end{enumerate}
\end{fact}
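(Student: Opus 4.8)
The plan is to prove both inequalities by induction on $k$, reducing the general case to the case $k=2$, which is the real content. For the second inequality \eqref{eq:ineq_von_bahr}, this is exactly the classical von Bahr–Esseen inequality, so I would simply cite it (von Bahr and Esseen, 1965) rather than reprove it; the constant $2$ and the exponent $p \in \intff{1}{2}$ are precisely their statement for centered independent summands. If a self-contained argument is wanted, I would note that for $p\in\intff{1}{2}$ the function $x\mapsto \abs{x}^p$ satisfies $\abs{x+y}^p \leq \abs{x}^p + \abs{y}^p + p\,\mathrm{sgn}(x)\abs{x}^{p-1}y$ up to handling the non-smoothness at $0$ — more cleanly, use the inequality $\abs{a+b}^p \le \abs{a}^p + p\abs{a}^{p-1}\abs{b}\cdot(\text{something})$; but the cleanest route is really the known reference.

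For the first inequality \eqref{eq:ineqneveu}, here is the argument I would carry out in detail. By induction it suffices to treat $k=2$: if $\xi,\eta\ge 0$ are independent with finite $p$-th moments, I want $\E[(\xi+\eta)^p]\le \E[\xi^p]+\E[\eta^p]+(\E\xi+\E\eta)^p$. The key pointwise inequality is that for $a,b\ge 0$ and $p\in\intff{1}{2}$,
\[
  (a+b)^p \le a^p + b^p + p a^{p-1} b + p a b^{p-1}.
\]
This follows from convexity/concavity bookkeeping: writing $g(t)=(a+t)^p$, one has $g(b)=a^p+\int_0^b p(a+t)^{p-1}\dd t$, and since $p-1\le 0$ the integrand $p(a+t)^{p-1}$ is at most $p\max(a,t)^{p-1}\le p(a^{p-1}+t^{p-1})$ — wait, one must be a little careful, so instead split at $t=a$: for $t\le a$, $(a+t)^{p-1}\le a^{p-1}$ giving a contribution $\le p a^{p-1} b$; for $t>a$, $(a+t)^{p-1}\le t^{p-1}$ and $\int_a^b p t^{p-1}\dd t \le b^p$. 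Combining, $(a+b)^p \le a^p + p a^{p-1} b + b^p$, which is even a bit stronger than needed. Taking expectations and using independence, $\E[(\xi+\eta)^p]\le \E[\xi^p] + p\,\E[\xi^{p-1}]\,\E[\eta] + \E[\eta^p]$, and then $\E[\xi^{p-1}]\le (\E\xi)^{p-1}$ by Jensen (concavity of $t\mapsto t^{p-1}$), so $p\,\E[\xi^{p-1}]\E[\eta]\le p(\E\xi)^{p-1}\E\eta \le (\E\xi+\E\eta)^p - (\E\xi)^p - (\E\eta)^p$ by the same pointwise inequality applied to the numbers $\E\xi,\E\eta$. This gives the $k=2$ case. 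For the induction step from $k$ to $k+1$, group $S_k=\xi_1+\dots+\xi_k$ and $\xi_{k+1}$: by the $k=2$ case (applied to the independent pair $S_k,\xi_{k+1}$), $\E[(S_k+\xi_{k+1})^p]\le \E[S_k^p] + \E[\xi_{k+1}^p] + (\E S_k + \E\xi_{k+1})^p - (\E S_k)^p - (\E\xi_{k+1})^p$; bound $\E[S_k^p]$ by the induction hypothesis, notice $\E S_k = \sum_{i\le k}\E\xi_i$, and the telescoping of the $(\E S_k)^p$ terms leaves exactly $\sum_{i=1}^{k+1}\E[\xi_i^p] + (\sum_{i=1}^{k+1}\E\xi_i)^p$.

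The main obstacle — really the only subtle point — is getting the pointwise subadditivity-type inequality for $(a+b)^p$ right with the correct constant, since $x\mapsto x^p$ is concave for $p<1$ but we are in the range $p\in\intff{1}{2}$ where it is convex, so naive subadditivity fails and one genuinely needs the cross term. The splitting-the-integral argument above handles this cleanly. Everything else (the two inductions, the Jensen step, the telescoping) is routine. I would present the $k=2$ pointwise lemma as a one-line computation, cite von Bahr–Esseen for part (2), and keep the write-up short.
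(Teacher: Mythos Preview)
The paper does not prove this Fact at all: it simply attributes \eqref{eq:ineqneveu} to Neveu and \eqref{eq:ineq_von_bahr} to von Bahr and Esseen, with references. Your plan to cite von Bahr--Esseen for part~(2) therefore matches the paper, and your self-contained argument for part~(1) goes beyond what the paper does; the overall strategy (a pointwise bound on $(a+b)^p$, Jensen for the cross term, then a telescoping induction) is indeed essentially Neveu's.

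That said, your execution of part~(1) contains two genuine errors. First, you justify the pointwise bound $(a+b)^p\le a^p+pa^{p-1}b+b^p$ by writing ``since $p-1\le 0$'', but for $p\in\intff{1}{2}$ one has $p-1\ge 0$, so $(a+t)^{p-1}$ is \emph{increasing} and your splitting of the integral at $t=a$ fails. The inequality is nevertheless true: differentiate the difference in $b$ and use that $x\mapsto x^{p-1}$ is subadditive on $\intfo{0}{\infty}$ (concave and vanishing at $0$), so $(a+b)^{p-1}\le a^{p-1}+b^{p-1}$. Second, and more seriously, the step
\[
  p(\E\xi)^{p-1}\E\eta \;\le\; (\E\xi+\E\eta)^p - (\E\xi)^p - (\E\eta)^p
\]
is false: for $p=3/2$ and $\E\xi=\E\eta=1$ the left side is $3/2$ while the right side is $2\sqrt{2}-2<1$. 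Invoking ``the same pointwise inequality'' actually yields the reverse direction. What you need instead is the convexity lower bound $p(\E S_k)^{p-1}\E\xi_{k+1}\le(\E S_{k+1})^p-(\E S_k)^p$ (tangent line below the graph of $x\mapsto x^p$). Applying your pointwise bound with $a=S_k$, $b=\xi_{k+1}$, then Jensen on $S_k^{p-1}$, then this convexity bound, gives
\[
  \E[S_{k+1}^p]-(\E S_{k+1})^p \;\le\; \E[S_k^p]-(\E S_k)^p+\E[\xi_{k+1}^p],
\]
which telescopes directly to \eqref{eq:ineqneveu}. The extra $-(\E\eta)^p$ term you were chasing is neither true nor needed.
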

The first inequality is due to Neveu (\cite{neveu_multiplicative})
while the second is borrowed from von Bahr and Esseen (\cite{vonbahr1965}, see also
\cite[p.~83]{petrov_limit}).
\begin{lemma}\label{lem:lpbounds}
  Let $p$ be in $\intoo{1}{\kappa\wedge 2}$. Let $1 \leq k \leq
  n$, then, in any case,
  \begin{gather}
    \label{eq:lpxkn}
    \lnorm{p}{X_{k,n}} \leq 2^{1+1/p} \lnorm{p}{M_\infty} e^{k\psi(p)/p};
    \\
    \label{eq:lpyjn}
    \lnorm{p}{Y_{j,n}} \leq e^{j\psi(p)/p} \varphi_p(a_{n-j})^{1/p}
    + \lnorm{p}{M_\infty}\varphi_1(a_{n-j}).
  \end{gather}
\end{lemma}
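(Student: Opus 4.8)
The plan is to prove both estimates by the same device: conditioning on $\mathcal{F}_k$ (resp.\ $\mathcal{F}_j$) freezes the coefficients $\cd[](x)$, $\abs{x}=k$ (resp.\ $\abs{x}=j$), while by the branching property the remaining factors $\renorm{\ceff_{n-k}(T[x])}-M_\infty(T[x])$ (resp.\ $\phi_{a_{n-j}}\renorm{\ceff_{n-j}(T[x])}$) become i.i.d.\ copies of one fixed random variable, independent of the conditioning $\sigma$-field. One then applies the matching inequality from the Fact preceding the lemma — von Bahr--Esseen \eqref{eq:ineq_von_bahr} for the centered sum $X_{k,n}$, Neveu \eqref{eq:ineqneveu} for the non-negative sum $Y_{j,n}$ — and finally turns moments of the summands into $\varphi_p,\varphi_1$ and $\lnorm{p}{M_\infty}$ via Lemma~\ref{lem:ineq_phi}.

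For \eqref{eq:lpxkn}: set $\xi\coloneqq\renorm{\ceff_{n-k}}-M_\infty$, and note $\Et\renorm{\ceff_{n-k}}=\Et[M_\infty]=1$, so conditionally on $\mathcal{F}_k$ the summands $\cd[](x)(\renorm{\ceff_{n-k}(T[x])}-M_\infty(T[x]))$ are independent and centered, with $\mathcal{F}_k$-measurable coefficients. Applying \eqref{eq:ineq_von_bahr} conditionally (and using independence to replace each conditional summand-moment by the unconditional $\Et\bracks{\abs{\xi}^p}$) gives
\[
  \Et\bracksof{\abs{X_{k,n}}^p}{\mathcal{F}_k}\leq 2\sum_{\abs{x}=k}\cd[](x)^p\,\Et\bracks{\abs{\xi}^p}.
\]
Taking expectations, using the branching identity $\Et\bracks[\big]{\sum_{\abs{x}=k}\cd[](x)^p}=\pars[\big]{\Et\sum_{i=1}^{\nu}A(i)^p}^{k}=e^{k\psi(p)}$ together with $\Et\bracks{\abs{\xi}^p}\leq 2^{p-1}\pars[\big]{\Et\renorm{\ceff_{n-k}}^p+\Et[M_\infty^p]}\leq 2^{p}\Et[M_\infty^p]$, where the last bound is the moment comparison $\Et\renorm{\ceff_m}^p\leq\Et[M_\infty^p]$ furnished by Lemma~\ref{lem:ineq_phi} with $\phi:x\mapsto x^p$, one gets $\Et\bracks{\abs{X_{k,n}}^p}\leq 2^{p+1}e^{k\psi(p)}\Et[M_\infty^p]$; taking $p$-th roots yields \eqref{eq:lpxkn}.

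For \eqref{eq:lpyjn}: conditionally on $\mathcal{F}_j$ the summands $\cd[](x)\,\phi_{a_{n-j}}\renorm{\ceff_{n-j}(T[x])}$, $\abs{x}=j$, are independent, non-negative, with $\mathcal{F}_j$-measurable coefficients, so \eqref{eq:ineqneveu} (again dropping the conditioning inside the resulting unconditional expectations by independence) gives
\[
  \Et\bracksof{Y_{j,n}^p}{\mathcal{F}_j}\leq\sum_{\abs{x}=j}\cd[](x)^p\,\Et\bracks[\big]{(\phi_{a_{n-j}}\renorm{\ceff_{n-j}})^p}+\pars[\Big]{\sum_{\abs{x}=j}\cd[](x)}^{p}\pars[\big]{\Et\bracks[\big]{\phi_{a_{n-j}}\renorm{\ceff_{n-j}}}}^{p}.
\]
Here I would apply Lemma~\ref{lem:ineq_phi} with $\phi=(\phi_a)^p$ — which is convex (the increasing convex map $t\mapsto t^p$ composed with the convex $\phi_a$), continuously differentiable, regularly varying of index $p$ at infinity, and dominated by $x^p$ since $\phi_a(x)\leq x$ — to obtain $\Et\bracks[\big]{(\phi_a\renorm{\ceff_m})^p}\leq\Et\bracks[\big]{(\phi_a(M_\infty))^p}=\varphi_p(a)$, and likewise, using that $\phi_a$ is $1$-Lipschitz (so $\phi_a(M_n)\to\phi_a(M_\infty)$ in $L^1$ by Biggins), $\Et\bracks{\phi_a\renorm{\ceff_m}}\leq\varphi_1(a)$. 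Since $\sum_{\abs{x}=j}\cd[](x)=M_j$ and $\Et\bracks[\big]{\sum_{\abs{x}=j}\cd[](x)^p}=e^{j\psi(p)}$, taking expectations and using $\Et[M_j^p]\leq\Et[M_\infty^p]$ gives $\Et\bracks{Y_{j,n}^p}\leq e^{j\psi(p)}\varphi_p(a_{n-j})+\Et[M_\infty^p]\,\varphi_1(a_{n-j})^p$; applying $(s+t)^{1/p}\leq s^{1/p}+t^{1/p}$ then yields \eqref{eq:lpyjn}.

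The main obstacle will be the bookkeeping with Lemma~\ref{lem:ineq_phi} in the regime $\kappa>2$, where its ``moreover'' part is stated only for $p=2$: for $p\in\intoo{1}{2}$ I would instead use its first (unconditional) inequality $\Et\bracks{\phi\renorm{\ceff_m}}\leq\Et\bracks{\phi(M_m)}$ and then pass from $M_m$ to $M_\infty$ via the $L^2$-boundedness of $(M_n)$ proved inside that lemma, which makes $(\phi(M_m))$ a uniformly integrable submartingale, hence $\Et\bracks{\phi(M_m)}\uparrow\Et\bracks{\phi(M_\infty)}$; the same argument gives $\Et[M_m^p]\leq\Et[M_\infty^p]$. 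Everything else — the branching property, the multiplicativity of $\cd[]$ along rays, and the $c_p$- and H\"older-type inequalities — is routine.
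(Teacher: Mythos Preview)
Your proposal is correct and follows essentially the same route as the paper: condition on $\mathcal{F}_k$ (resp.\ $\mathcal{F}_j$), apply von Bahr--Esseen \eqref{eq:ineq_von_bahr} (resp.\ Neveu \eqref{eq:ineqneveu}) to the frozen sum, control the summand moments via Lemma~\ref{lem:ineq_phi}, and use the branching identity $\Et\bracks[\big]{\sum_{\abs{x}=k}\cd[](x)^p}=e^{k\psi(p)}$. Your extra paragraph on the regime $\kappa>2$ --- noting that the ``moreover'' clause of Lemma~\ref{lem:ineq_phi} is literally stated only for $p=2$ and supplying the submartingale argument $\Et[\phi(M_m)]\uparrow\Et[\phi(M_\infty)]$ to bridge the gap --- is a point the paper leaves implicit, so you are being slightly more careful there, but the overall argument is the same.
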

\begin{proof}
  For $X_{k,n}$,
  we apply, conditionally on $\mathcal{F}_k$, the inequality
  \eqref{eq:ineq_von_bahr}, to obtain
  \begin{equation}
    \label{eq:xkn_cond}
    \Et\bracksof*{\abs{X_{k,n}}^p}{\mathcal{F}_k}
    \leq 2 \sum_{\abs{x}=k}\cd[](x)^p
    \Et\bracks*{\abs{\renorm{\ceff_{n-k}}- M_\infty}^p}.
  \end{equation}
  Now recall that, by convexity and Lemma~\ref{lem:ineq_phi},
  $\Et\bracks*{\renorm{\ceff_n}^p} \leq \Et\bracks*{M_\infty^p}$,
  therefore,
  \[
    \Et \pars[\big]{\abs{ \renorm{\ceff_{n-k}} - M_\infty}}^p
    \leq
    2^{p-1} \pars[\big]{ \Et \bracks[\big]{\renorm{\ceff_{n-k}}^p}
    + \Et \bracks[\big]{M_\infty^p}
    }
    \leq 2^p \Et \bracks[\big]{M_\infty^p}.
  \]
  On the other hand,
  \begin{align*}
    \Et \bracks[\Big]{\sum_{\abs{x}=k} \cd[](x)^p}
    &= \Et \bracks[\Big]{\Et \bracksof[\Big]{%
    \sum_{\abs{y}=k-1} \cd[](y)^p
   \pars[\big]{
     \textstyle\sum_{i=1}^{\numch[]{y}}
     \wt[](yi)^p}}{\mathcal{F}_{k-1}}}
    \\
    &=e^{\psi(p)} \Et \bracks[\Big]{\sum_{\abs{x}=k-1} \cd[](x)^p}
    = \dotsb = e^{k\psi(p)}.
   \end{align*}
   Taking the expectation on both sides of \eqref{eq:xkn_cond} yields
  \begin{equation*}
    \Et \bracks[\big]{\abs{X_{k,n}^p}} \leq 2^{p+1} e^{k\psi(p)}
    \Et \bracks[\big]{M_\infty^p},
  \end{equation*}
  hence our inequality.

  For $Y_{j,n}$, we use the inequality
  \eqref{eq:ineqneveu} conditionally on $\mathcal{F}_j$:
  \begin{equation*}
    \Et \bracksof[\big]{Y_{j,n}^p}{\mathcal{F}_j}
    \leq \sum_{\abs{x}=j} \cd[](x)^p
    \Et \bracks[\big]{\pars[\big]{\phi_{a_{n-j}}\renorm{\ceff_{n-j}}}^p}
  + \pars[\Big]{\sum_{\abs{x}=j}\cd[](x) \Et \phi_{a_{n-j}} \renorm{\ceff_{n-j}}}^p
  \end{equation*}
  Therefore, using twice Lemma~\ref{lem:ineq_phi},
  \begin{equation*}
      \Et\bracks*{{Y_{j,n}}^p}
      \leq e^{j\psi(p)} \varphi_p(a_{n-j}) + \varphi_1(a_{n-j})^p \Et [M_j^p],
  \end{equation*}
  which implies our inequality.
\end{proof}

We will apply \eqref{eq:ceffXY} by properly choosing $k$ according to $n$.
To this end, we need the following lemma.
\begin{lemma}
  \label{lem:anoveran-k}
  Let $(k_n)_{n\geq1}$ be a sequence of non-negative integers.
  If $\lim_{n\to\infty} k_n/n = 0$, then $a_{n-k_n} \sim a_n$.
\end{lemma}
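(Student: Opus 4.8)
The plan is to prove the equivalent statement that $a_{n-k_n}/a_n = u_n/u_{n-k_n}$ converges to $1$. Since $(u_n)$ is non-increasing, the bound $u_n/u_{n-k_n}\le 1$ is automatic, so only a matching lower bound needs work. Write $\epsilon_i=\Et\bracks{\phi_{a_i}\renorm{\ceff_i}}$; the recursion \eqref{eq:recun} reads $u_{i+1}=u_i(1-\epsilon_i)$ with $\epsilon_i=1-u_{i+1}/u_i\in\intfo{0}{1}$, so telescoping gives $u_n/u_{n-k_n}=\prod_{i=n-k_n}^{n-1}(1-\epsilon_i)$. Exactly as in the proof of Lemma~\ref{lem:aprioriupper}, applying Lemma~\ref{lem:ineq_phi} to the convex, continuously differentiable, regularly varying function $\phi_{a_i}$ yields $\epsilon_i\le\Et\bracks{\phi_{a_i}(M_\infty)}=\varphi_1(a_i)$. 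Since $u_i\to 0$ by Lemma~\ref{lem:aprioriupper} we have $a_i\to\infty$, hence $\varphi_1(a_i)\to 0$; and as $k_n=o(n)$ forces $n-k_n\to\infty$, for $n$ large every index occurring in the product is large, so $\varphi_1(a_i)<1$ there. The elementary inequality $\prod_j(1-x_j)\ge 1-\sum_j x_j$ for $x_j\in\intff{0}{1}$ then gives
\[
  1 \ge \frac{u_n}{u_{n-k_n}} \ge 1 - \sum_{i=n-k_n}^{n-1}\varphi_1(a_i),
\]
so the whole statement is reduced to showing that $\sum_{i=n-k_n}^{n-1}\varphi_1(a_i)\longrightarrow 0$ as $n\to\infty$.

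To prove this last convergence I would feed Lemma~\ref{lem:order_varphi} (with $p=1$) into the sum: it gives $\varphi_1(a)\oforder a^{1-\kappa}$ for $1<\kappa<2$, $\varphi_1(a)\oforder a^{-1}\log a$ for $\kappa=2$, and $\varphi_1(a)\sim\Et\bracks{M_\infty^2}/a$ for $\kappa>2$. The crucial point is that, once these are combined with the order of magnitude of $u_i=1/a_i$ — namely $a_i\oforder i^{1/(\kappa-1)}$, resp.\ $a_i\oforder i\log i$, resp.\ $a_i\oforder i$ — each of the three expressions is, up to multiplicative constants, equal to $1/i$. Hence
\[
  \sum_{i=n-k_n}^{n-1}\varphi_1(a_i) \oforder \sum_{i=n-k_n}^{n-1}\frac{1}{i}
  \sim -\log\left(1-\frac{k_n}{n}\right) \longrightarrow 0
\]
because $k_n/n\to 0$, which finishes the proof. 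For $\kappa>2$ the estimate $a_i\oforder i$ is already at hand by combining Proposition~\ref{prop:lower} with Lemma~\ref{lem:aprioriupper}; for $\kappa\le 2$ one invokes the matching sharp upper bound on $\Et[\ceff_n]$.

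The only genuine obstacle is this final sum estimate. For an \emph{arbitrary} sequence $k_n=o(n)$, controlling $\sum_{i=n-k_n}^{n-1}\varphi_1(a_i)$ really does require the sharp lower bound on $a_i$ (of order $i^{1/(\kappa-1)}$, resp.\ $i\log i$) in the regime $\kappa\le 2$: with only the a priori estimate $a_i\gtrsim i$ from Lemma~\ref{lem:aprioriupper} the argument above still goes through, but merely for sequences satisfying $k_n=o(n^{\kappa-1})$, resp.\ $k_n\log n=o(n)$ — which is enough for the one slowly growing choice of $k_n$ needed elsewhere, but not for the full statement. Everything else — the telescoping, the comparison $\epsilon_i\le\varphi_1(a_i)$ via Lemma~\ref{lem:ineq_phi}, and the passage from the product to the sum — is routine.
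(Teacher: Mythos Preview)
Your approach mirrors the paper's almost exactly: bound $u_n/u_{n-1}\geq 1-\varphi_1(a_{n-1})$ via \eqref{eq:recun} and Lemma~\ref{lem:ineq_phi}, iterate, and then invoke an estimate of the form $\varphi_1(a_m)\leq C/m$ to conclude. The only cosmetic difference is that the paper uses monotonicity of $(a_i)$ to reach the cruder product $(1-\varphi_1(a_{n-k_n}))^{k_n}$ instead of your inequality $\prod_j(1-x_j)\geq 1-\sum_j x_j$.

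The circularity you flag is real, and the paper shares it. The paper cites Proposition~\ref{prop:lower} for the step $\varphi_1(a_n)\leq C/n$, but lower bounds on $u_n$ give \emph{upper} bounds on $a_n$ and hence, since $\varphi_1$ is decreasing, only \emph{lower} bounds on $\varphi_1(a_n)$ --- the wrong direction. Obtaining $\varphi_1(a_n)\leq C/n$ for $\kappa\leq 2$ genuinely requires the sharp lower bound on $a_n$, which is established only after Lemma~\ref{lem:tailceff}, which in turn uses Lemma~\ref{lem:cvlp}, which uses the present lemma. Your proposed escape (work with the a~priori bound $a_n\gtrsim n$ of Lemma~\ref{lem:aprioriupper}, accepting the restriction $k_n=o(n^{\kappa-1})$, resp.\ $k_n=o(n/\log n)$) does cover the logarithmic $k_n$ of Lemma~\ref{lem:cvlp}, but \emph{not} the $s_n\oforder n^{1-1/\alpha}$ appearing in the almost-sure convergence proof when $\kappa$ is close to~$1$: there $\alpha\geq\lceil 2/(\kappa-1)\rceil$ is large, so $1-1/\alpha>\kappa-1$. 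The clean fix is a bootstrap: first prove the weak form, run it through Lemmas~\ref{lem:cvlp} and~\ref{lem:tailceff} to obtain the sharp upper bounds on $u_n$, and then return to establish the full statement without circularity.
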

\begin{proof}
  Recall that, by \eqref{eq:recun} and \Cref{lem:ineq_phi}, for any $n \geq 2$,
  \[
    1\geq\frac{u_n}{u_{n-1}} = 1 - \Et \phi_{a_{n-1}}\renorm{\ceff_{n-1}}
    \geq 1 - \varphi_1(a_{n-1}).
  \]
  Iterating this inequality yields, for any large enough $n$,
  \[
    1 \geq \frac{u_n}{u_{n-k_n}} \geq \prod_{i=1}^{k_n}
  \pars[\big]{1 - \varphi_1(a_{n-i})}.
  \]
  Therefore, since the sequence $(a_n)$ is non-decreasing and the function $\varphi_1$ is
  non-increasing,
  \[
    \frac{u_n}{u_{n-k_n}} \geq \pars[\big]{1 - \varphi_1(a_{n-k_n})}^{k_n}.
  \]
  On the other hand, combining the lower bounds for $(u_n)$
  (Proposition~\ref{prop:lower}) with \eqref{eq:order_varphi} shows that, in any
  case, we may find $C \in \intoo{0}{\infty}$ such that for all $n \geq 1$,
  \[
    \varphi_1(a_n) \leq \frac{C}{n}.
  \]
  Plugging this into the previous inequality gives
  \[
  \frac{u_n}{u_{n-k_n}} \geq \pars[\big]{1 - \frac{C}{n - k_n}}^{k_n}
  \xrightarrow[ n\to\infty ]{  } 1. \qedhere{}
  \]
\end{proof}
\begin{lemma}
  \label{lem:cvlp}
  For any $p$ in $\intoo{1}{\kappa\mi2}$, the sequence $(\renorm{\ceff_n})$
  converges towards $M_\infty$ in $L^p$.
\end{lemma}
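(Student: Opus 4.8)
The plan is to use the decomposition \eqref{eq:ceffXY} with a well-chosen sequence $k = k_n$ and then estimate the $L^p$ norm of each term using Lemma~\ref{lem:lpbounds}, Lemma~\ref{lem:anoveran-k}, and Lemma~\ref{lem:order_varphi}.

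First I would choose a sequence $(k_n)$ that grows to infinity but with $k_n / n \to 0$ (for instance $k_n = \floor{\sqrt n}$ or $k_n = \floor{n / \log n}$); by Lemma~\ref{lem:anoveran-k} this guarantees $a_n / a_{n-k_n} \to 1$, so the prefactors $a_n/a_{n-k_n}$ and $a_n/a_{n-j}$ in \eqref{eq:ceffXY} are bounded (say by $2$ for $n$ large). Then, bounding the $L^p$ norm of the right-hand side of \eqref{eq:ceffXY} by the triangle inequality, it suffices to show that each of
\[
  \lnorm{p}{\pars[\big]{a_n/a_{n-k_n} - 1} M_\infty}, \qquad
  \lnorm{p}{X_{k_n,n}}, \qquad
  \sum_{j=1}^{k_n} \frac{a_n}{a_{n-j}} \lnorm{p}{Y_{j,n}}
\]
tends to $0$. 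The first term goes to $0$ since $a_n/a_{n-k_n} \to 1$ and $M_\infty \in L^p$ (Fact~\ref{fact:liutail}). For the second, \eqref{eq:lpxkn} gives $\lnorm{p}{X_{k_n,n}} \leq 2^{1+1/p}\lnorm{p}{M_\infty} e^{k_n \psi(p)/p}$, which tends to $0$ because $\psi(p) < 0$ (here we use $1 < p < \kappa$, so $\psi(p) < 0$ on $(1,\kappa)$ when $\kappa \le 2$, and also $\psi(2)<0$ when $\kappa>2$, both consequences of $\psi(1)=0$, $\psi'(1)<0$ and convexity).

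The main work is controlling the $Y$-sum. Using \eqref{eq:lpyjn},
\[
  \sum_{j=1}^{k_n} \frac{a_n}{a_{n-j}} \lnorm{p}{Y_{j,n}}
  \leq 2 \sum_{j=1}^{k_n} \pars[\big]{ e^{j\psi(p)/p}\varphi_p(a_{n-j})^{1/p}
  + \lnorm{p}{M_\infty}\varphi_1(a_{n-j}) }
\]
for $n$ large. For $1 \leq j \leq k_n$ we have $n - j \geq n - k_n \to \infty$, and since $(a_m)$ is non-decreasing, $a_{n-j} \geq a_{n-k_n}$; combining Proposition~\ref{prop:lower} with Lemma~\ref{lem:order_varphi} (as in the proof of Lemma~\ref{lem:anoveran-k}) yields constants so that $\varphi_1(a_{n-j}) \leq C/(n-k_n)$ and $\varphi_p(a_{n-j}) \leq C' a_{n-j}^{p-\kappa\mi2} \leq C''/(n-k_n)^{\theta}$ for some $\theta > 0$ depending on the case (e.g.\ $\theta = 1 - p/\kappa > 0$ when $1<p<\kappa\le 2$, using $\varphi_p(a) \le C a^{p-\kappa}$ and $a_m \oforder m^{1/(\kappa-1)}$; similarly in the $\kappa=2$ and $\kappa>2$ cases). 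Hence the $\varphi_1$ part of the sum is bounded by $2\lnorm{p}{M_\infty}\, C k_n/(n-k_n) \to 0$ by the choice of $k_n$, while the $\varphi_p$ part is bounded by $2 C''^{1/p} (n-k_n)^{-\theta/p} \sum_{j\geq 1} e^{j\psi(p)/p}$, a convergent geometric series times a vanishing factor, so it also tends to $0$.

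The delicate point, and the one I would be most careful about, is the interplay between the two opposite requirements on $(k_n)$: it must be large enough that $e^{k_n\psi(p)/p} \to 0$ controls $X_{k_n,n}$, yet small enough ($k_n/n\to 0$) that Lemma~\ref{lem:anoveran-k} applies and that the $\varphi_1$ contribution $k_n/(n-k_n)$ still vanishes. Any $k_n$ with $\log n \ll k_n \ll n$ works, e.g.\ $k_n = \floor{\sqrt n}$; I would fix this choice at the outset. Once all three pieces are shown to vanish, the triangle inequality in $L^p$ applied to \eqref{eq:ceffXY} gives $\lnorm{p}{\renorm{\ceff_n} - M_\infty} \to 0$, which is the claim. \qedhere
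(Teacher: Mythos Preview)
Your proof is correct and follows the same route as the paper's: feed a sequence $k=k_n$ into \eqref{eq:ceffXY} and control each piece via Lemmas~\ref{lem:lpbounds}, \ref{lem:order_varphi} and \ref{lem:anoveran-k} together with the triangle inequality. The only notable difference is the choice of $k_n$: the paper takes $k_n = \floor{(-2/\psi(p))\log a_n}$, which produces the quantitative estimate \eqref{eq:lpnormsXY}, $\lnorm{p}{X_{k_n,n}} + \sum_{j\le k_n}\lnorm{p}{Y_{j,n}} \le C\,a_n^{\,1-(\kappa\mi2)/p}$, reused afterwards in the almost-sure convergence argument and in Lemma~\ref{lem:tailceff}; your choice $k_n=\floor{\sqrt n}$ suffices for the bare statement of Lemma~\ref{lem:cvlp} but does not deliver that rate. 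One small caution: in your parenthetical you invoke the two-sided asymptotic $a_m \oforder m^{1/(\kappa-1)}$ to compute $\theta$, but this is part of Theorem~\ref{thm} and not yet available here; the one-sided bound $a_m \ge c\,m$ from Lemma~\ref{lem:aprioriupper} is what is actually at hand, and it already gives $\varphi_p(a_{n-j})^{1/p}\le C(n-k_n)^{(p-\kappa\mi2)/p}\to 0$, which is all you need.
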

\begin{proof}
  Recall that $\psi(p) < 0$.
  For $n \geq 1$, let
  $k_n = \floor{ (-2/\psi(p)) \log a_n}$
  so that, by \Cref{lem:lpbounds}, for some constant $C_1 > 0$,
  \[
    \lnorm{p}{X_{k_n,n}} \leq C_1 a_n^{-2/p}.
  \]
  It is clear from Proposition~\ref{prop:lower} that $(k_n)$ satisfies the
  assumption of the previous lemma.
  Moreover,
  \begin{align*}
    \sum_{j=1}^{k_n} \lnorm{p}{Y_{j,n}}
    &\leq \frac{e^{\psi(p)/p}}{1-e^{\psi(p)/p}}\varphi_p(a_{n-k_n})^{1/p}
    + \lnorm{p}{M_\infty}k_n\varphi_1(a_{n-k_n})
    \\
  &\leq C_2 \pars[\big]{\varphi_p(a_{n-k_n})^{1/p} + k_n\varphi_1(a_{n-k_n})},
  \end{align*}
  for some constant $C_2 > 0$.
  Now, using \eqref{eq:order_varphi}, we see that, for some constants
  $C_3$, $C_3'$, $C_4$ and $C_4'$ in
  $\intoo{0}{\infty}$, in any case, for any $n\geq1$,
  \begin{align*}
    k_n \varphi_1(a_{n-k_n}) \leq C_3 \log (a_{n-k_n})^2 a_{n-k_n}^{1 - \kappa\mi2}
    &\leq C_3' \log (a_{n})^2 a_{n}^{1 - \kappa\mi2}
    \\
    \text{and} \quad \varphi_p(a_{n-k_n})^{1/p} \leq C_4 a_{n-k_n}^{(p-\kappa\mi2)/p}
    &\leq C'_4 a_n^{1 - (\kappa\mi2)/p}.
  \end{align*}
  Since
  $
    -2/p < -1 \leq 1 - \kappa\mi2 < 1 - (\kappa\mi2)/p < 0,
  $
  there exists $C > 0$ such that, for any $n\geq1$,
  \begin{equation}
    \label{eq:lpnormsXY}
    \lnorm{p}{X_{k_n,n}} + \sum_{j=1}^{k_n} \lnorm{p}{Y_{j,n}}
    \leq C a_n^{1-(\kappa\mi2)/p}.
  \end{equation}
  To conclude this proof, it remains to see that,
  by \eqref{eq:ceffXY} and Minkowski's inequality,
  \[
    \lnorm{p}{\renorm{\ceff_n} - M_\infty}
    \leq
    \pars[\Big]{\frac{a_n}{a_{n-k_n}} - 1} \lnorm{p}{M_\infty}
    + \frac{a_n}{a_{n-k_n}} \lnorm{p}{X_{k_n,n}}
    + \frac{a_n}{a_{n-k_n}} \sum_{j=1}^{k_n} \lnorm{p}{Y_{j,n}}
  \]
  By our choice of $(k_n)$ and the preceding lemma, this upper bound is
  asymptotically equivalent to $\lnorm{p}{X_{k_n,n}}
    + \sum_{j=1}^{k_n} \lnorm{p}{Y_{j,n}}$, which goes to $0$ as $n$ goes to
    infinity.
\end{proof}

From there, the almost sure convergence of $(\renorm{\ceff_n})$ towards
$M_\infty$ can be classically obtained by accelerating this $L^p$ convergence
and using a monotony argument.
\begin{proof}[Proof of the almost sure convergence in Theorem~\ref{thm}]
  Using \eqref{eq:lpnormsXY}, together with the \textit{a priori} bound
  \eqref{eq:ineqapriori}, shows the existence of $C > 0$ and $C'>0$
  such that for any
  $n\geq1$,
  \[
    \Et\bracks[\big]{\abs{\renorm{\ceff_n} - M_\infty}^p}
    \leq C' a_{n}^{p - \kappa\mi2} \leq \frac{C}{n^{\kappa\mi2 - p}}.
  \]
  Letting $\alpha = \ceil{2/(\kappa\mi2 - p)}$, we obtain that
  \[
    \sum_{n\geq1} \Et\bracks[\big]{\abs{\renorm{\ceff_{n^\alpha}} - M_\infty}^p}
    <\infty,
  \]
  hence, by Borel-Cantelli's lemma, $(\renorm{\ceff_{n^\alpha}})$ converges
  almost surely to $M_\infty$.
  
  Now, let, for $n\geq 1$, $r_n = \ceil{n^{1/\alpha}}$. Then, for all $n\geq 1$,
  \[
    (r_n - 1)^\alpha \leq n \leq r_n^\alpha,
  \]
  and by the fact that the sequence $\left(\ceff_n\right)$ is non-increasing,
  \[
    \ceff_{r_n^\alpha} \leq \ceff_n \leq \ceff_{(r_n - 1)^\alpha}.
  \]
  This implies that
  \[
    \frac{u_{r_n^\alpha}}{u_{(r_n-1)^\alpha}}
    \renorm{\ceff_{r_n^\alpha}}
    \leq \renorm{\ceff_n}
    \leq
    \frac{u_{(r_n-1)^\alpha}}{u_{r_n^\alpha}}
    \renorm{\ceff_{(r_n-1)^\alpha}}.
  \]
  Now, write $(r_n-1)^\alpha = r_n^\alpha - s_n$. Since $s_n / r_n^\alpha \to 0$,
  we may use Lemma~\ref{lem:anoveran-k} to see that
  \[
    \frac{u_{r_n^\alpha}}{u_{(r_n-1)^\alpha}} \xrightarrow[n\to\infty]{}
    1,
  \]
  which concludes this part of the proof.
\end{proof}
We may now conclude in the case $\kappa > 2$.
\begin{proof}[End of the proof of Theorem~\ref{thm} in the case $\kappa > 2$]
  We already know that, for all $n\geq 1$, by Lemma~\ref{lem:ineq_phi},
  $\Et\bracks{\renorm{\ceff_n}^2} \leq \Et\bracks{M_\infty^2}$.
  Now, by the almost-sure convergence of $\renorm{\ceff_n}$ to $M_\infty$ and
  Fatou's lemma,
  $\Et\bracks{M_\infty^2}\leq \liminf \Et\bracks{\ceff_n^2}$, thus
  $\Et\bracks{\renorm{\ceff_n}^2} \to \Et\bracks{M_\infty^2}$.

  Finally, by dominated convergence,
  \[
    \Et\bracks*{\frac{\renorm{\ceff_{n-1}^2}}{a_{n - 1} + \renorm{\ceff_{n-1}}}}
    \sim u_n \Et\bracks*{M_\infty^2},
  \]
  and by the identity \eqref{eq:recun} and Lemma~\ref{lem:un},
  \[
    u_n \sim \frac{1}{n\Et\bracks[\big]{M_\infty^2}}.
  \]
  To obtain the last equality, proceed in the following way:
  \begin{align*}
    \Et \bracks[\big]{M_\infty^2}
    &=
    \Et \bracks[\big]{\sum_{i=1}^{\numch[]{\root}} \wt[](i)^2
    M_\infty(T[i])^2} +
    \Et \bracks[\Big]{ \sum_{1 \leq i \neq j \leq \numch[]{\root}}^{}
    \wt[](i)\wt[](j)M_\infty(T[i])M_\infty(T[j]) }
    \\
  &= \Et \bracks[\big]{\sum_{i=1}^{\nu} A(i)^2}
  \Et \bracks[\big]{M_\infty^2} + \Et\bracks[\big]{\sum_{1 \leq i \neq j \leq \nu}^{} A(i)A(j)
  },
\end{align*}
by the branching property. Finally notice that in the case $\kappa > 2$,
$
  \Et \bracks[\big]{\sum_{i=1}^{\nu} A(i)^2} = e^{\psi(2)} < 1.
$
\end{proof}
To handle the case $1 < \kappa \leq 2$, we need a uniform lower bound on the
tail probability of $\renorm{\ceff_n}$.
\begin{lemma}
  \label{lem:tailceff}
  If $\kappa \in \intof{1}{2}$, we can find $\delta_0 > 0$ and $c_0 >
  0$ such that
  \[
    \Pt \pars*{\renorm*{\ceff_n} > r}
    \geq c_0 r^{-\kappa}, \qquad \forall r \in \intff{1}{\delta_0 a_n},\ \forall
    n \geq 1.
  \]
\end{lemma}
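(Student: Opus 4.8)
The plan is to bypass the branching recursion altogether and compare $\renorm{\ceff_n}$ with $M_\infty$ directly. The polynomial lower tail of $M_\infty$, which is available precisely because $\kappa\leq2$, will provide the right order $r^{-\kappa}$, while the already-established rate of $L^p$-convergence of $\renorm{\ceff_n}$ to $M_\infty$ will control the discrepancy. The competition between these two inputs is exactly what will force the cutoff $r\leq\delta_0 a_n$.

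First I would fix once and for all some $p\in\intoo{1}{\kappa}$ and record two ingredients. From Fact~\ref{fact:liutail}, the lower bound in $\Pt(M_\infty>s)\asymp s^{-\kappa}$ gives constants $c>0$ and $N\geq1/2$ with $\Pt(M_\infty>s)\geq c\,s^{-\kappa}$ for $s\geq N$; combining this with the monotonicity of $s\mapsto\Pt(M_\infty>s)$ and with $\Pt(M_\infty>N)>0$, one upgrades it to a constant $\ell>0$ with $\Pt(M_\infty>s)\geq\ell\,s^{-\kappa}$ for \emph{all} $s\geq1/2$ (on $\intff{1/2}{N}$ one bounds $\Pt(M_\infty>s)$ from below by $\Pt(M_\infty>N)$ and $s^{-\kappa}$ from above by $2^{\kappa}$). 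On the other hand, the proof of the almost-sure convergence in Theorem~\ref{thm} supplies a constant $C>0$ such that $\Et\bracks*{\abs{\renorm{\ceff_n}-M_\infty}^{p}}\leq C\,a_n^{\,p-\kappa}$ for every $n\geq1$ (here $\kappa\mi2=\kappa$).

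The heart of the proof is then the elementary observation that for $r\geq1$, on the event $\{M_\infty>2r\}\cap\{\abs{\renorm{\ceff_n}-M_\infty}<r\}$ one has $\renorm{\ceff_n}>r$, so that, by the two ingredients and Markov's inequality applied to $\abs{\renorm{\ceff_n}-M_\infty}^{p}$,
\[
  \Pt\pars*{\renorm*{\ceff_n}>r}
  \geq \Pt(M_\infty>2r)-\Pt\pars*{\abs{\renorm{\ceff_n}-M_\infty}\geq r}
  \geq \ell\,2^{-\kappa}r^{-\kappa}-C\,a_n^{\,p-\kappa}r^{-p}.
\]
Writing the right-hand side as $r^{-\kappa}\bigl(\ell\,2^{-\kappa}-C\,a_n^{\,p-\kappa}r^{\,\kappa-p}\bigr)$, the parenthesis is at least $\ell\,2^{-\kappa-1}$ as soon as $C\,a_n^{\,p-\kappa}r^{\,\kappa-p}\leq\ell\,2^{-\kappa-1}$, i.e.\ as soon as $r\leq\delta_0 a_n$ with $\delta_0\coloneqq\bigl(\ell/(2^{\kappa+1}C)\bigr)^{1/(\kappa-p)}$ (shrunk so that $\delta_0\leq1$ if need be). This gives the lemma with $c_0\coloneqq\ell\,2^{-\kappa-1}$, and for the finitely many $n$ with $\delta_0 a_n<1$ the interval $\intff{1}{\delta_0 a_n}$ is empty, so the estimate is genuinely uniform over all $n\geq1$.

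The step I expect to be the real obstacle is not the computation above but the requirement that the bound be uniform in $n$ \emph{and} valid for $r$ up to the moving cutoff $\delta_0 a_n$: for $r$ in a fixed compact set it follows at once from $\renorm{\ceff_n}\to M_\infty$ in distribution together with the heavy tail of $M_\infty$, and all the work is in pushing $r$ up to $r\asymp a_n$, which is exactly what makes the quantitative rate $\Et\bracks*{\abs{\renorm{\ceff_n}-M_\infty}^{p}}\leq C\,a_n^{\,p-\kappa}$ indispensable. The balance between the lower tail $\ell\,2^{-\kappa}r^{-\kappa}$ of $M_\infty$ and the deviation term $C\,a_n^{\,p-\kappa}r^{-p}$ both produces the cutoff and is, as the Remark after Theorem~\ref{thm} notes, precisely why this route stops at $r\asymp a_n$ and does not deliver a sharp asymptotic for $\Et[\ceff_n]$. (A more hands-on induction on $n$ through \eqref{eq:ceff2} and \eqref{eq:renormrec}, using $\psi(\kappa)=0$ to keep the recursion critical, is also conceivable, but it requires delicate tracking of the constants produced by the $\phi_1$-corrections, whose crude Markov bound is itself of the same order $\asymp a_n^{-\kappa}$ as the target near $r\asymp a_n$.)
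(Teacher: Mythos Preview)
Your proof is correct and follows essentially the same route as the paper: both lower-bound $\Pt(\renorm{\ceff_n} > r)$ by $\Pt(M_\infty > 2r)$ minus an error term of size $C\,a_n^{p-\kappa} r^{-p}$ obtained via Markov's inequality in $L^p$, and then balance the two to produce the cutoff $r \le \delta_0 a_n$. The only difference is cosmetic: the paper works directly with the decomposition \eqref{eq:ceffXY} and bounds $|X_{k_n,n}| + \sum_j Y_{j,n}$ through \eqref{eq:lpnormsXY}, whereas you invoke the packaged consequence $\Et\bracks{|\renorm{\ceff_n} - M_\infty|^p} \le C\,a_n^{p-\kappa}$ already recorded in the proof of the almost-sure convergence.
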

\begin{proof}
  Let $\delta > 0$ and $r \geq 1$. Let $(k_n)$ be as in the proof of
  Lemma~\ref{lem:cvlp}.
  Fix some $p \in \intoo{1}{\kappa}$.
  By \eqref{eq:ceffXY},
  \begin{align*}
    M_\infty &= \frac{a_{n-k_n}}{a_n}\renorm{\ceff_n} - X_{k_n,n} +
    \sum_{j=1}^{k_n} \frac{a_{n-k_n}}{a_{n-j}} Y_{j,n} \\
             &\leq
             \frac{a_{n-k_n}}{a_n}\renorm{\ceff_n} + \abs{X_{k_n, n}}
             + \sum_{j=1}^{k_n} Y_{j,n},
  \end{align*}
  so by the union bound,
  \[
    \Pt(\renorm{\ceff_n} > r)
    \geq
    \Pt\pars[\Big]{\frac{a_{n-k_n}}{a_n}\renorm{\ceff_n} > r}
    \geq
    \Pt \pars{ M_\infty > 2r}
    -
  \Pt \pars[\Big]{\abs*{X_{k_n,n}} + \sum_{j=1}^{k_n} Y_{j, n} > r}.
  \]
  By Markov's inequality and then the inequality~\eqref{eq:lpnormsXY},
  \begin{align*}
  \Pt \pars[\Big]{\abs*{X_{k_n,n}} + \sum_{j=1}^{k_n} Y_{j, n} > r}
      &\leq r^{-p} 
  \pars[\Big]{\lnorm{p}{X_{k_n,n}} + \lnorm[\Big]{p}{\sum_{j=1}^{k_n} Y_{j, n}}}^p
    \\
    &\leq C_1 a_n^{p-\kappa} r^{-p},
    \end{align*}
    for some constant $C_1 \in \intoo{0}{\infty}$.

    On the other hand, by Fact~\ref{fact:liutail},
    \[
      \inf_{r\geq 1} r^\kappa \Pt(M_\infty > 2r) \eqqcolon C_2 > 0.
    \]
    This implies that, for all $r$ in $\intff{1}{\delta a_n}$,
    \[
      r^\kappa\Pt(\renorm{\ceff_n} > r)
      \geq C_2 - C_1r^{\kappa-p}a_n^{p-\kappa} \geq C_2 - C_1 \delta^{\kappa
      -p},
    \]
    which is positive as soon as $\delta$ is small enough.
\end{proof}
\begin{proof}[End of the proof of Theorem~\ref{thm} : upper bounds]
  Here, we assume that $1<\kappa\leq2$.
  Recall that, for any $n\geq2$,
  \[
    u_{n-1} - u_n =
    u_{n-1}
    \Et \bracks*{ \frac{\renorm{\ceff_{n-1}}^2}{a_{n-1} + \renorm{\ceff_{n-1}}}}.
  \]
  By \Cref{lem:tailceff} and the same computation as those yielding
  \eqref{eq:phip_integral},
  \begin{equation*}
  \Et \bracks[\Big] {\frac{\renorm{\ceff_{n-1}}^2}{a_{n-1} + \renorm{\ceff_{n-1}}}}
    \geq c_0 \int_1^{\delta_0a_{n-1}}
    \frac{x^2 + 2a_{n-1} x}{(a_{n-1} + x)^2}
    x^{-\kappa} \dd x.
  \end{equation*}
  Thus the change of variable $y=x/a_{n-1}$ leads to
  \begin{equation*}
  \Et \bracks[\Big] {\frac{\renorm{\ceff_{n-1}}^2}{a_{n-1} + \renorm{\ceff_{n-1}}}}
    \geq c_0 a_{n-1}^{1-\kappa}
    \int_{1/a_{n-1}}^{\delta_0}
    y^{1-\kappa} \frac{y+2}{(1+y)^2} \dd y.
  \end{equation*}
  This integral converges in the case $\kappa < 2$ while in the case $\kappa =
  2$, it becomes larger than some constant times $\log a_{n-1}$ for $n$ large enough .
  Hence, there exists $C > 0$ such that, for $n\geq2$,
  \[
    u_{n-1} - u_n \geq C 
    \begin{cases}
      u_{n-1} a_{n-1}^{1-\kappa} = u_{n-1}^{-\kappa} & \text{if $1 < \kappa <
      2$} \\
      u_{n-1} a_{n-1}^{-1} \log a_{n-1}
        = u_{n-1}^2\log (1/u_{n-1}) &
        \text{if $\kappa = 2$,}
    \end{cases}
  \]
  and we may conclude by Lemma~\ref{lem:un}.
\end{proof}
\section*{Appendix : proofs omitted from the main text}

\begin{fact}[Null recurrence]
  If \eqref{eq:hyp_normalized} holds, then for
  $\GW$-almost every tree $t$,
  the random walk on $t$
  of probability kernel $\Pq^t$ is recurrent.
  If, additionnally,
  \eqref{eq:hyppsip1} and
  \eqref{eq:hyp_bigg} hold, then, for 
  $\GW$-almost every infinite tree $t$,
  the random walk on $t$
  of probability kernel $\Pq^t$ is null-recurrent.
\end{fact}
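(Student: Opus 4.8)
The plan is to establish recurrence first, by a dissipation estimate on the sequence $u_n=\Et[\ceff_n]$, and then to rule out positive recurrence by relating the total mass of the reversible measure of the walk to the series $\sum_n M_n$. For recurrence: since $(\ceff_n(t))_{n\ge0}$ is non-increasing (as recalled in the introduction), so is $(u_n)$, with $u_1=\Et[\ceff_1]=\Et[M_1]=e^{\psi(1)}=1$ by \eqref{eq:hyp_normalized}; hence $0\le u_n\le1$ and $(u_n)$ converges. Starting from \eqref{eq:ceff1} (which itself uses \eqref{eq:hyp_normalized}) and splitting the finite expectation $\Et[\ceff_{n-1}]$, I would write, for $n\ge2$,
\[
  u_{n-1}-u_n=\Et\bracks[\Big]{\ceff_{n-1}-\frac{\ceff_{n-1}}{1+\ceff_{n-1}}}
  =\Et\bracks[\Big]{\frac{\ceff_{n-1}^2}{1+\ceff_{n-1}}}.
\]
The left-hand sides telescope to $u_1-\lim_{n}u_n<\infty$, so $\Et[\ceff_{n-1}^2/(1+\ceff_{n-1})]\to0$. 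Now $\ceff_n\downarrow\ceff(T)\ge0$ pointwise and $\ceff_n^2/(1+\ceff_n)\le\ceff_n\le\ceff_1=M_1\in L^1$, so by continuity of $x\mapsto x^2/(1+x)$ and dominated convergence, $\Et[\ceff(T)^2/(1+\ceff(T))]=0$, that is, $\ceff(T)=0$ $\GW$-almost surely. By the equivalence recalled in the introduction between positivity of $\ceff$ and transience, the walk is recurrent for $\GW$-almost every $t$.

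For null recurrence, assume moreover \eqref{eq:hyppsip1} and \eqref{eq:hyp_bigg}. An irreducible, recurrent, reversible Markov chain is positive recurrent if and only if its invariant measure (unique up to a multiplicative constant) has finite total mass. Here the walk of kernel $\Pq^t$ is reversible with respect to $x\mapsto\pi^t(x)$, extended by $\pi^t(\prt\root)=\cd[t](\root)=1$, and since each edge is counted at its two endpoints,
\[
  \sum_{x\in t\cup\{\prt\root\}}\pi^t(x)
  =2\pars[\Big]{\cd[t](\root)+\sum_{x\in t,\,x\neq\root}\cd[t](x)}
  =2+2\sum_{n\ge1}M_n(t).
\]
Thus positive recurrence is equivalent to $\sum_{n\ge1}M_n(t)<\infty$. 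But under \eqref{eq:hyp_normalized}, \eqref{eq:hyppsip1} and \eqref{eq:hyp_bigg}, Biggins' theorem (already invoked to define $M_\infty$) gives $M_n(T)\to M_\infty(T)$ $\GW$-almost surely with $M_\infty(T)>0$ on the event of non-extinction; on that event the terms $M_n(T)$ do not go to $0$, so $\sum_n M_n(T)=\infty$ and the walk is not positive recurrent. Combined with the recurrence just proved, this yields null recurrence for $\GW$-almost every infinite tree.

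The only genuinely non-routine point is the dissipation step: one must notice that the defect $x-x/(1+x)=x^2/(1+x)$, summed over the levels, equals the finite total variation of the monotone sequence $(u_n)$, and then pass to the limit inside the expectation using continuity of $x\mapsto x^2/(1+x)$ and the integrable dominating function $M_1$. The null-recurrence half is then soft, resting only on reversibility of the walk and on the non-degeneracy of Biggins' martingale, both of which are already available.
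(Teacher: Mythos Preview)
Your argument is correct, and both halves land on the same key facts as the paper, only through slightly different packaging. For recurrence, the paper applies the series/parallel identities directly to the \emph{limiting} conductance, obtaining $\Et[\ceff(T)]=\Et[\ceff(T)/(1+\ceff(T))]$ (hence $\Et[\ceff(T)^2/(1+\ceff(T))]=0$) in one line, whereas you reach the same vanishing by telescoping the finite-level identity \eqref{eq:ceff1} and passing to the limit via dominated convergence with the majorant $M_1$; your route is a touch longer but perfectly sound. For null recurrence, both proofs boil down to the divergence of $\sum_n M_n(T)$ on non-extinction: the paper gets there by iterating a recursion for the expected hitting time $\alpha(t)=\Eq^t_\root[\tau_{\prt\root}]$, while you compute the total mass $\sum_x\pi^t(x)=2+2\sum_{n\ge1}M_n(t)$ of the reversible measure directly and invoke the standard positive-recurrence criterion. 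Since for a recurrent reversible chain $\Eq_{\prt\root}[\tau^+_{\prt\root}]=\sum_x\pi^t(x)/\pi^t(\prt\root)$, the two computations are equivalent; your formulation is arguably the cleaner of the two.
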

\begin{proof}
  For a weighted tree $t$, let
  $\ceffprt(t) = \Pq_\root^t(\tau_{\prt\root} = \infty)$
  and $\ceff(t) = \Pq_\root(\tau_{\root}^+ = \infty)/\Pq_\root^t(\root,\prt\root)$.
  These are the conductances between, respectively, $\prt\root$ and infinity,
  and $\root$ and infinity.
  By the Markov property,
  \[
    \ceffprt(t) = \frac{\ceff(t)}{1 + \ceff(t)}
    \quad\text{and}\quad
    \ceff(t) = \sum^{\nu_t(\root)}_{i=1} \wt(i)\ceffprt(t[i]).
  \]
  Now if $T$ is a weighted Galton-Watson tree and
  $\Et\bracks{\sum_{i=1}^{\nu} A(i)} = 1$, taking the expectation in
  the previous identities leads to
  \[
    \Et[\ceff(T)] = \Et[\ceffprt(T)] = \Et\bracks*{\frac{\ceff(T)}{1 + \ceff(T)}},
  \]
  which implies that, almost surely, $\ceff(T) = 0$, so the random walk is
  recurrent.

  To prove that it is null-recurrent, consider, for any recurrent
  weighted tree $t$,
  $\alpha(t) = \Eq_\root^t\bracks{\tau_{\prt\root}}$. We want to show that, almost
  surely on the event of non-extinction,
  $\alpha(T) = \infty$.
  The function $\alpha$ satisfies, by the Markov
  property,
  \[
    \alpha(t) = \Pq_\root^t(\root,\prt\root) +
    \sum_{i=1}^{\numch[t]{\root}} \Pq_\root^t(\root,i) (1 + \alpha(t[i]) + \alpha(t)).
  \]
  Thus we see that, if $\alpha(t)$ is finite, so are the $\alpha(t[x])$ for $x$ in
  $t$. In this case, one has
  \[
    \alpha(t) = 1 + \sum_{i=1}^{\numch[t]{\root}} \wt(i) + \sum_{i=1}^{\numch[t]{\root}}
    \wt(i)\alpha(t[i]),
  \]
  and iterating the previous identity,
  \[
    \alpha(t) = 1 + 2 \sum_{k=1}^{n - 1} M_k(t) + M_n(t)
    + \sum_{\abs{x}=n} \cd[t](x) \alpha(t[x])
    \geq \sum_{k=1}^n M_k(t),
    \quad \text{for all $n \geq 1$.}
  \]
  This shows that
  \[
    \Pt(\alpha(T) < \infty) \leq \Pt\pars[\Big]{\sum_{k=1}^\infty M_k(T) <
    \infty},
  \]
  but our assumptions and Biggins' theorem imply that, almost surely
  on the event of non-extinction, $M_n(T) \to M_\infty(T) > 0$, thus
  $\Pt(\alpha(T) < \infty)$ is the probability that $T$ is finite.
\end{proof}
Finally, we recall and prove Fact~\ref{fact:ineqconvexrenorm}.
\begin{fact*}
  Let $\xi$ be a non-negative random
  variable such that $\E[\xi]$ is in $\intoo{0}{\infty}$.
  Let $\phi : \R_+ \to \R_+$ be a differentiable,
  convex function.
  Further assume that
  \begin{equation}
    \E \bracks*{\phi\renorm{\xi}} < \infty \implies \E
    \bracks*{\phi((1+\varepsilon)\renorm{\xi})}
    < \infty \text{ for some $\varepsilon > 0$.}
    \label{eq:hypfactphi}\tag{*}
  \end{equation}
  Then,
  \begin{equation*}
    \E \bracks*{ \phi\renorm*{\frac{\xi}{1+\xi}} }
    \leq
    \E \bracks*{\phi\renorm{\xi}}.
  \end{equation*}
\end{fact*}
\begin{proof}
  We may assume that $\E[\phi\renorm{\xi}]$ is finite, otherwise there is
  nothing to prove.
  For $x\in \intff{0}{1}$ and $y\geq 0$, define
  \[
    f(x) = \E \bracks*{\phi\renorm*{\frac{\xi}{1+x\xi}}},
    \quad
    g(x) = \E \bracks[\Big]{\frac{\xi}{1+x\xi}},
    \quad
    h(x,y) = \frac{y}{1+xy},
    \quad
    \varphi(x,y) = \phi \pars[\Big]{\frac{h(x,y)}{g(x)}}.
  \]
  Notice that
  \[
    \frac{y}{1+y} \leq h(x,y) \leq \min \pars[\Big]{y, \frac{1}{x}}
    \quad\text{and}\quad
    \frac{\partial h}{\partial x} (x,y) = - h(x,y)^2,
  \]
  As a consequence,
  $g$ is finite and continuous on $\intff{0}{1}$, differentiable on
  $\intof{0}{1}$.

  Now let $\alpha \in \intoo{0}{1}$ and remark that, since $g$ is
  non-increasing,
  \begin{equation}
    \frac{h(x,\xi)}{g(x)}
    \leq
    \begin{dcases}
    \renorm{\xi}\frac{g(0)}{g(\alpha)}
    & \text{if $x \in \intff{0}{\alpha}$;}
    \\
    \frac{1}{\alpha g(1)}
    & \text{if $x \in \intff{\alpha}{1}$.}
    \end{dcases}
    \label{eq:ineqhoverg}
  \end{equation}
  In particular, the second inequality shows that $f$ is finite and continuous on $\intof{0}{1}$.

  Elementary calculus shows that
  \begin{align*}
    \frac{\partial\varphi}{\partial x} (x,\xi)
    &= \frac{1}{g(x)^2}
    \phi'\pars[\Big]{\frac{h(x,\xi)}{g(x)}}
    \braces[\Big]{\E[h(x,\xi)^2]h(x,\xi) - h(x,\xi)^2 \E[h(x,\xi)]}
    \\
    &= \frac{1}{g(x)^2}
    \phi'\pars[\Big]{\frac{X_x}{g(x)}}
  \braces[\Big]{\E[X_x^2]X_x - X_x^2 \E[X_x]},
  \end{align*}
  with $X_x = h(x,\xi)$.
  Recall that $\phi'$ is increasing ($\phi$ is convex), so, again by the second
  inequality of \eqref{eq:ineqhoverg},
  \[
    \sup_{x\in\intff{\alpha}{1}}
    \abs*{\frac{\partial\varphi}{\partial x} (x,\xi)}
    \leq \frac{2}{\alpha^3 g(1)^2} \phi'\pars[\Big]{\frac{1}{\alpha g(1)}},
  \]
  showing that $f$ is differentiable on $\intof{0}{1}$.
  The very nice trick of \cite{hu_shi_subdiffusive} is to
  consider an independent copy $\tilde{X}_x$ and remark that, by symmetry,
  \begin{align*}
  \E\bracks[\Big]{\frac{\partial\varphi}{\partial x} (x,\xi)}
    &= \frac{1}{2g(x)^2}
    \E\bracks[\Big]{
    \pars[\Big]{
      \phi'\pars[\Big]{\frac{X_x}{g(x)}} -
      \phi'\pars[\Big]{\frac{\tilde{X}_x}{g(x)}}
    }
  \braces[\Big]{\tilde{X}_x^2X_x - X_x^2 \tilde{X}_x}
    }
    \\
    &= \frac{1}{2g(x)^2}
    \E\bracks[\Big]{\tilde{X}_x X_x
    \pars[\Big]{
      \phi'\pars[\Big]{\frac{X_x}{g(x)}} -
      \phi'\pars[\Big]{\frac{\tilde{X}_x}{g(x)}}
    }
    \pars{\tilde{X}_x - X_x}
    } \leq 0,
  \end{align*}
  because the two differences in the expectation have
  opposite signs.

  Finally, we use the hypothesis \eqref{eq:hypfactphi} to prove the continuity of
  $f$ at $0$.
  Let $\varepsilon > 0$ be such that
  $\E\bracks*{\phi((1+\varepsilon)\renorm{\xi})} < \infty$.
  By continuity of $g$, whenever $\alpha$ is small enough,
  $g(0) / g(\alpha) \leq 1 + \varepsilon$.
  Use the first inequality of~\eqref{eq:ineqhoverg} and
  the fact that $\phi$ is non-negative and convex, to obtain that, for any $x \in
  \intff{0}{\alpha}$,
  \[
    \varphi(x,\xi) \leq \max\pars*{\phi(0),
    \phi((1 + \varepsilon)\renorm{\xi})}s
  \]
  and, by dominated convergence, the continuity of $f$ at $0$, so that
  $f(0) \leq f(1)$.
\end{proof}

\end{document}